\documentclass[11pt]{amsart}

\usepackage[a4paper,margin=30mm]{geometry}
\usepackage[T1]{fontenc}
\usepackage{lmodern}
\usepackage{microtype}
\usepackage{amsmath,amssymb,amsthm,mathtools, MnSymbol}
\usepackage{aliascnt}
\usepackage{enumitem}
\usepackage{xcolor}
\usepackage{hyperref}
\usepackage[nameinlink,noabbrev]{cleveref}

\hypersetup{
  colorlinks=true,
  linkcolor=blue!45!black,
  citecolor=blue!45!black,
  urlcolor=blue!55!black,
  pdftitle={On free generators in the Grothendieck--Teichmüller Lie Algebra},
  pdfauthor={Thomas Willwacher}
}

\setlist{itemsep=0.25em,topsep=0.5em}
\allowdisplaybreaks

\theoremstyle{plain}
\newtheorem{theorem}{Theorem}[section]
\newaliascnt{proposition}{theorem}
\newtheorem{proposition}[proposition]{Proposition}
\aliascntresetthe{proposition}
\newaliascnt{lemma}{theorem}
\newtheorem{lemma}[lemma]{Lemma}
\aliascntresetthe{lemma}
\newaliascnt{corollary}{theorem}

\aliascntresetthe{corollary}

\theoremstyle{definition}
\newaliascnt{definition}{theorem}
\newtheorem{definition}[definition]{Definition}
\aliascntresetthe{definition}
\newaliascnt{convention}{theorem}

\aliascntresetthe{convention}

\theoremstyle{remark}
\newaliascnt{remark}{theorem}
\newtheorem{remark}[remark]{Remark}
\aliascntresetthe{remark}

\newaliascnt{example}{theorem}
\newtheorem{example}[example]{Example}
\aliascntresetthe{example}

\crefname{theorem}{Theorem}{Theorems}
\crefname{proposition}{Proposition}{Propositions}
\crefname{lemma}{Lemma}{Lemmas}
\crefname{corollary}{Corollary}{Corollaries}
\crefname{definition}{Definition}{Definitions}
\crefname{remark}{Remark}{Remarks}
\crefname{convention}{Convention}{Conventions}
\crefname{example}{Example}{Examples}
\crefname{section}{Section}{Sections}

\newcommand{\Q}{\mathbb Q}
\newcommand{\C}{\mathbb C}
\newcommand{\Z}{\mathbb Z}
\newcommand{\GRT}{\mathrm{GRT}}
\newcommand{\grt}{\mathfrak{grt}}

\newcommand{\Lhat}{\widehat{\mathbb L}}
\newcommand{\fhat}{\widehat{\mathfrak f}}
\newcommand{\Ass}{\operatorname{Ass}_1}
\newcommand{\wt}{\operatorname{wt}}
\newcommand{\lev}{\operatorname{lev}}
\newcommand{\Ihara}{\mathbin{\circledast}}
\newcommand{\shuffle}{\mathbin{\amalg}}
\newcommand{\two}[1]{2^{\{#1\}}}
\newcommand{\K}{\mathbb{K}}
\newcommand{\UZ}{\mathcal U^{\mathcal Z}}

\title{On free generators in the\\
Grothendieck--Teichm\"uller Lie Algebra}

\date{}

\author{Thomas Willwacher}
\address{Department of Mathematics\\ ETH Zurich\\ Rämistrasse 101 \\ 8092 Zurich, Switzerland}
\email{thomas.willwacher@math.ethz.ch}

\begin{document}

\begin{abstract}
We prove that any homogeneous family
$\sigma_3,\sigma_5,\ldots$ in $\grt_1(\K)$ with nonzero
coefficients of $x^{2k}y$ in $\sigma_{2k+1}$ generates a free Lie subalgebra, building on the work of Brown.
\end{abstract}

\maketitle


\section{Introduction}\label{sec:intro}

The Grothendieck--Teichm\"uller Lie algebra $\grt_1(\K)$ is a weight-graded pronilpotent Lie algebra that was introduced by Drinfeld in \cite{Drinfeld}.
Concretely, elements of $\grt_1(\K)$ are Lie-like formal power series $\psi\in\K\llangle x,y\rrangle$ in two non-commuting variables $x$ and $y$ with coefficients in the field $\K$ that satisfy the linear equations \eqref{eq:grt-sym}-\eqref{eq:grt-pentagon} below.

The Deligne-Drinfeld conjecture states that $\grt_1$ is the completed free Lie algebra in generators $\sigma_3,\sigma_5,\sigma_7,\dots$ of homogeneous weights (degree in $x,y$) $3,5,7,\dots$.
It is known from work of Drinfeld that there exist elements $\sigma_{2k+1}\in\grt_1(\K)$ of weight $2k+1$ for all $k\ge 1$ such that the coefficient $[x^{2k}y]\sigma_{2k+1}$ of $x^{2k}y$ in $\sigma_{2k+1}$ is non-zero, and those are expected to be the generators of $\grt_1$ according to the Deligne-Drinfeld conjecture.
The elements $\sigma_{2k+1}$ are not uniquely determined beyond their leading coefficients. 


The most important result in the study of the Grothendieck-Teichmüller Lie algebra (in the opinion of the author) is the proof of one half of the Deligne-Drinfeld conjecture by Francis Brown:
\begin{theorem}[Consequence of F. Brown's work \cite{BrownMTZ}]
  \label{thm:brown}
    Let $\K$ be a field of characteristic zero. 
    Then there are elements $\sigma_3,\sigma_5,\dots\in \grt_1(\K)$ with non-zero leading coefficients as above that generate a free Lie subalgebra of $\grt_1(\K)$.
\end{theorem}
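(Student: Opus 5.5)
The plan is to derive the statement from Brown's theorem on mixed Tate motives over $\Z$, together with the injection of the motivic Lie algebra into $\grt_1$. Brown proves that the motivic Galois group $\mathcal G^{\mathcal{MT}(\Z)}$ of the Tannakian category $\mathcal{MT}(\Z)$ acts faithfully on the motivic fundamental groupoid of $\mathbb P^1\setminus\{0,1,\infty\}$ with tangential base points. Concretely, the natural map from the pro-unipotent radical $\mathcal U^{\mathcal{MT}}$ to the automorphisms of the de Rham fundamental groupoid is injective. The Lie algebra $\mathfrak g^{\mathcal{MT}}=\operatorname{Lie}\mathcal U^{\mathcal{MT}}$ is known to be free. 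By Ext computations (Borel, Deligne--Goncharov), $\operatorname{Ext}^1_{\mathcal{MT}(\Z)}(\Q(0),\Q(n))$ is one-dimensional for odd $n\ge3$ and zero otherwise, and $\operatorname{Ext}^2=0$. Hence $\mathfrak g^{\mathcal{MT}}$ is the graded free Lie algebra on one generator in each weight $-(2k+1)$, $k\ge1$.

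First, I would pass to $\Q$. It suffices to construct the $\sigma_{2k+1}$ over $\Q$, because a free Lie subalgebra over $\Q$ remains free after the flat base change $-\otimes_\Q\K$, and $\grt_1(\Q)\otimes\K\subset\grt_1(\K)$ since the defining equations are linear over $\Q$.

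Second, I would use the embedding into $\grt_1$. The action on the fundamental groupoid is determined by the image $\psi$ of the path ${}_1\gamma_0$ (the Drinfeld associator direction). The derivative of this action gives a Lie algebra map $\mathfrak g^{\mathcal{MT}}\otimes\Q\to\grt_1(\Q)$ with respect to the Ihara bracket $\Ihara$. To see that the image satisfies \eqref{eq:grt-sym}--\eqref{eq:grt-pentagon}, I would use the standard fact that the motivic Drinfeld associator satisfies the duality, hexagon and pentagon relations. These relations come from the geometry of $\mathcal M_{0,4}$ and $\mathcal M_{0,5}$, so $\mathcal U^{\mathcal{MT}}$ preserves them. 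Brown's faithfulness result then makes this map injective.

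Third, I would choose generators $\sigma_{2k+1}$ of $\mathfrak g^{\mathcal{MT}}$ in each odd weight and take their images. Since the map is an injective Lie algebra map, the images generate a free Lie subalgebra.

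The main obstacle is the leading coefficient: one must show that $[x^{2k}y]\sigma_{2k+1}\neq0$. For this I would pass to the depth-one quotient. The coefficient of $x^{2k}y$ is the depth-one part, which records the action on the motivic zeta value $\zeta^{\mathfrak m}(2k+1)$, up to sign and normalization. Since $\zeta^{\mathfrak m}(2k+1)$ is nonzero and primitive, and $\sigma_{2k+1}$ is dual to it modulo decomposables, the coefficient is nonzero. To make this precise I would use Brown's coaction formula $D_{2k+1}\zeta^{\mathfrak m}(2k+1)=1\otimes\zeta^{\mathfrak L}(2k+1)$, with $\zeta^{\mathfrak L}(2k+1)\ne0$. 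Alternatively, one can argue that if the coefficient vanished, then $\sigma_{2k+1}$ would lie in depth $\ge2$, which would contradict the one-dimensionality of the abelianization in weight $2k+1$ together with the existence of Drinfeld's elements with nonzero leading term.
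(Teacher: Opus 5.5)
Your outline is correct, and it is the ``standard consequence'' of Brown's work that the introduction mentions. It is, however, a genuinely different route from the paper's. The paper never passes through $\mathcal{MT}(\Z)$. It obtains Theorem~\ref{thm:brown} as the special case of Theorem~\ref{thm:main} applied to Drinfeld's elements from Lemma~\ref{lem:ell-nonzero}, and it proves Theorem~\ref{thm:main} directly inside $\grt_1$. For an arbitrary normalized family, it pulls coefficient functions back along the orbit map $h\mapsto\rho_F(h)\Ihara\Phi_{\mathrm{ev}}$ to the shuffle algebra $\mathcal U_\K$. It then uses Terasoma's theorem to show that the coefficients $c_{a,b}$ of \emph{any} normalized $\sigma_{2k+1}$ are universal (Proposition~\ref{prop:universal-coefficients}). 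Finally, it runs Brown's $2$-adic deconcatenation argument to show the Lyndon pairing matrix is invertible (Proposition~\ref{prop:pairing-invertible}), which excludes a nonzero kernel of $\rho_{\mathrm{Lie}}$.

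In your version, injectivity is imported wholesale from three sources: Brown's faithfulness theorem (whose proof contains that same $2$-adic argument, run on motivic MZVs), the Deligne--Goncharov freeness of $\mathfrak g^{\mathcal{MT}}$, and the inclusion of its image in $\grt_1$. The only new check is the leading coefficient, and your coaction argument for it is sound. The operative point is primitivity: $\zeta^{\mathfrak a}(2k+1)$ is a nonzero primitive, so it vanishes on $[\mathfrak g,\mathfrak g]$ and is a nonzero form on the one-dimensional space $\mathfrak g^{ab}_{2k+1}$ spanned by the class of $\sigma_{2k+1}$. Its value on $\sigma_{2k+1}$ is $\pm[x^{2k}y]$ of the image. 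Nonvanishing of $\zeta^{\mathfrak L}(2k+1)$ alone would not suffice, since $\mathcal L_{2k+1}\cong\mathfrak g_{2k+1}^\vee$ is not one-dimensional in general (e.g.\ $k=5$).

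Your route is short given the motivic machinery, but it certifies only the particular generators coming from $\mathfrak g^{\mathcal{MT}}$, which is exactly the lack of control the paper sets out to remove. The paper's route avoids motives, needs only Terasoma's and Drinfeld's theorems, and works for every family with nonzero leading coefficients.

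Two cautions. First, drop your ``alternative'' leading-coefficient argument. Drinfeld's elements lie in $\grt_1$, but nothing says they lie in the image of $\mathfrak g^{\mathcal{MT}}$; that is the open half of the Deligne--Drinfeld conjecture. If the image of $\sigma_{2k+1}$ had vanishing $x^{2k}y$-coefficient, the whole image of $\mathfrak g^{\mathcal{MT}}_{2k+1}$ would simply sit in depth $\geq2$, with no contradiction. Second, $\zeta^{\mathfrak a}(2k+1)\neq0$ deserves a line of justification. If it vanished, $\zeta^{\mathfrak m}(2k+1)$ would be $\mathcal U^{\mathcal{MT}}$-invariant, hence in $\Q[\zeta^{\mathfrak m}(2)]$, hence zero in odd weight. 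That contradicts $\zeta(2k+1)\neq0$.
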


The theorem as stated above has two downsides: First, it does not readily appear in the above form in \cite{BrownMTZ}, though it is a standard consequence of the main results of that paper.
Second, we do not have very good control over the exact form of the generators $\sigma_{2k+1}\in \grt_1(\K)$ that are required to make it true.
This paper tries to remedy both of these issues by giving a somewhat self-contained and motive-free proof of the following slightly improved version of Brown's theorem:
\begin{theorem}\label{thm:main}
    Let $\K$ be a field of characteristic zero. 
    Let $\sigma_3,\sigma_5,\dots$ be any collection of elements of the Grothendieck--Teichm\"uller Lie algebra $\grt_1(\K)$ such that $\sigma_{2k+1}$ is homogeneous of weight $2k+1$ and the leading order coefficients $[x^{2k}y]\sigma_{2k+1}$ are non-zero for all $k\ge 1$. Then the elements $\sigma_3,\sigma_5,\dots$ generate a free Lie subalgebra of $\grt_1(\K)$.
\end{theorem}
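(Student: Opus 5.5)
The plan is to follow Brown's strategy, but to replace the motivic input by an explicit algebraic statement: injectivity of a linearized depth-graded ``level'' map built from the coefficients of $x^{2k}y$. There are three steps.

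\emph{Step 1: the Ihara action and the level filtration.} Brown's $2$-$3$ filtration (the level) lets us work in $\fhat=\Lhat(x,y)$ with the Ihara bracket $\Ihara$. Define the level of a word to be its number of occurrences of $y$ beyond the first, or, following Brown, the number of letters $x_3$ in the $\{x_2,x_3\}$-basis coming from the Hoffman elements. We then consider the associated graded of the Lie algebra generated by the $\sigma_{2k+1}$. The key structural fact to establish is that, modulo higher level, the operator $\sigma_{2k+1}\Ihara(-)$ depends only on the leading coefficient $c_k=[x^{2k}y]\sigma_{2k+1}\neq 0$. Indeed, $\sigma_{2k+1}\equiv c_k\,\mathrm{ad}(x)^{2k}y$ modulo terms of depth $\ge 2$. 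Hence the lowest-order part of any bracket word $[\sigma_{2k_1+1},[\dots,\sigma_{2k_r+1}]]$ is independent of the choice of generators. This is exactly why \cref{thm:main} holds for \emph{any} choice once it holds for one.

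\emph{Step 2: reduction to a linear-algebra statement.} It suffices to show the following. Let $\mathfrak g$ be the free Lie algebra on generators $s_{2k+1}$, and let $\phi\colon\mathfrak g\to\grt_1$ be the map $s_{2k+1}\mapsto\sigma_{2k+1}$. Then $\phi$ is injective after passing to associated gradeds for a suitable filtration. To do this, pass to the universal enveloping algebra $U\mathfrak g$, the noncommutative polynomial algebra on the $s_{2k+1}$ (Lyndon words). Its action on $\Q\langle x,y\rangle$ via the Ihara action is $u\mapsto u\cdot 1$. We then show that the elements $\sigma_{2k_1+1}\circ\cdots\circ\sigma_{2k_r+1}$, applied to a suitable vector such as the dual of the shuffle-regularized associator coefficients, are linearly independent. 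Concretely, we pair with the words $\zeta^{\mathfrak m}(2,\dots,2,3,2,\dots,2,3,\dots)$, i.e.\ the words $x^{a_0}y\,(x y)^{\dots}$ in Hoffman form, and must show that the resulting matrix is triangular with respect to level, with invertible diagonal blocks.

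\emph{Step 3: invertibility of the level-graded blocks (the main obstacle).} The diagonal block for level $\ell$ is Brown's matrix of coefficients of $\zeta(2,\dots,2,3,2,\dots,2)$ expanded in $\zeta(2n+1)\zeta(2)^m$. Its invertibility rests on Zagier's explicit formula for $H(a,b)=\zeta(2^{\{a\}}32^{\{b\}})$ together with the $2$-adic nonvanishing of the coefficients $\binom{2r}{2a+2}-(1-2^{-2r})\binom{2r}{2b+1}$. I would reproduce that argument by hand, using the Ihara derivation formula: the infinitesimal coaction $D_{2r+1}$ of Brown is replaced by the dual of $\mathrm{ad}$ by $\mathrm{ad}(x)^{2r}y$ in the linearized Ihara bracket. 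Showing that this linearized coaction, evaluated on Hoffman words, reduces modulo level to Zagier's coefficients, and that the resulting matrices have odd $2$-adic determinant, is where I expect the real work to lie. Once that is done, the triangularity gives injectivity of $U\mathfrak g\to$ (dual of Hoffman words), hence of $\phi$, and the freeness follows.
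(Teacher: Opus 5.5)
There is a genuine gap, and it lies in the mechanism of your Steps~1 and~3. The filtration you actually use is the depth filtration. Your first definition of level (occurrences of $y$ beyond the first) is depth minus one, and your structural fact $\sigma_{2k+1}\equiv c_k\,\mathrm{ad}(x)^{2k}y$ modulo depth $\geq2$ is a depth statement. This is not Brown's level filtration (number of $3$'s in a Hoffman word), and the depth-graded route is known to fail. Under the linearized Ihara bracket the elements $\mathrm{ad}(x)^{2k}y$ do \emph{not} generate a free Lie algebra: Ihara's relation $\{\bar\sigma_3,\bar\sigma_9\}-3\{\bar\sigma_5,\bar\sigma_7\}=0$ holds in depth $2$, weight $12$, coming from the cusp form $\Delta$. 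So injectivity on the depth-associated graded is false. Independence of the lowest-order parts from the choice of generators therefore proves nothing.

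The same problem sinks Step~3. Replace $\sigma_{2r+1}$ by $\mathrm{ad}(x)^{2r}y$ in the cut formula for $\partial_{2r+1}$, which is what your linearized coaction amounts to. Then only blocks whose interior contains a single $1$ contribute, i.e.\ interiors $0^j10^{2r-j}$. In $0\,\varrho(w)\,1$ zeros occur in runs of length at most $2$, which forces $r\leq2$, and for $r=2$ both endpoints of the block are $1$. Hence the linearized $\partial_{2r+1}$ annihilates every $\mathcal Z(w)$ for $r\geq2$. The pairing with Hoffman elements degenerates, and Zagier's coefficients cannot arise this way.

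What is missing is the fact that drives the whole argument. The numbers entering the level-graded pairing are $c_{a,b}=[(xy)^bx^2y(xy)^a]\sigma_{2k+1}$ and $c_k^{\partial}=[(xy)^kx]\sigma_{2k+1}$. For $k\geq2$ these are coefficients of words of depth $k$, invisible to the linear part. They are nevertheless determined by the normalization $[x^{2k}y]\sigma_{2k+1}=1$ (\cref{prop:universal-coefficients}), and that is the true reason the theorem holds for \emph{every} choice of generators.

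Zagier's formula alone does not give this, since it concerns the KZ associator (or Brown's motivic generators), not arbitrary elements of $\grt_1(\K)$. The paper proves it using Terasoma's theorem, the Brown--Zagier identity for every Drinfeld associator (\cref{thm:terasoma}). It pulls that identity back along the orbit map $h\mapsto\rho_F(h)\Ihara\Phi_{\mathrm{ev}}$ and compares $f_{2k+1}$-coefficients, using the exact cut formula \eqref{eq:contiguous-cut} rather than a linearized coaction. Once you have these universal coefficients and their $2$-adic valuations, three further steps give the invertibility your Step~3 is after: the level-lowering lemma, deconcatenation modulo terms of $2$-adic valuation $\geq1$, and the valuation-triangular Lyndon pairing matrix $P_{N,p}$.
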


We emphasize that the main technical ideas of the proof are due to Francis Brown \cite{BrownMTZ} and we claim no originality in that regard. We also use Terasoma's theorem that the Brown--Zagier coefficient identity holds for every Drinfeld associator \cite{Terasoma,Zagier}.
The main contribution of this paper is to make Brown's proof more accessible to non-motivic readers and to state it in a way that works for any choice of generators $\sigma_{2k+1}$ with non-zero leading coefficients.
This work is loosely based on lecture notes of the author for a course given at ETH Zurich in 2012 on the Grothendieck-Teichmüller group and Brown's Theorem \cite{Willwacher}.

\begin{remark}
Although Theorem \ref{thm:main} is arguably only a minor improvement of Brown's Theorem \ref{thm:brown}, it can be useful in applications where it is difficult to control the behavior of the generators $\sigma_{2k+1}$ beyond their leading coefficients.
To give one prominent example, Brown--Chan--Galatius--Payne construct lower bounds on the cohomology of the general linear groups and the moduli spaces of abelian varieties in \cite{BCGP} using the existence of free generators in $\grt_1$. Since they can control only the leading coefficients, see \cite[Proposition 6.4]{BCGP}, their results \cite[Theorem 1.3 and Proposition 1.8]{BCGP} are restricted to a range of weights ("$N=11$") that reflects the range in which the Deligne-Drinfeld conjecture is known to hold. Our Theorem \ref{thm:main} allows us to remove this restriction and thus to extend their results to $N=\infty$. 
\end{remark}

\subsection*{Declaration of AI usage}
This document has been created with the assistance of AI tools, namely ChatGPT 5.6 Pro and to a lesser extent GitHub Copilot. Concretely, the workflow used was the following:
In a first stage of brainstorming the project was dicussed with the AI, on the basis of making my old lecture notes on Brown's work into a paper, with the upgrade of replacing the role of the KZ associator in those notes by a more general one.
After the exact plan was decided, I used ChatGPT 5.6 Pro to generate a first draft of the paper on the basis of the lecture notes, which was then iteratively improved in several rounds of prompting. Once this resulted in a somewhat acceptable text, I took over the editing and revised and reworked the manuscript. In the final stage ChatGPT was used for proofreading and for critiquing my writing. The human author is fully responsible for any errors or shortcomings that might remain in the final version of the manuscript.

\section{Recollections on the Grothendieck--Teichm\"uller Lie algebra}\label{sec:background}

In the following $\K$ will always denote a field of characteristic zero.

\subsection{Power series}
We denote by $\K\llangle x,y\rrangle$
the associative algebra of formal power series in two non-commuting variables $x$ and $y$.
It comes with a complete grading by the \emph{weight}, which is the total number of letters $x$ and $y$ in a monomial.
With this grading defining a topology, $\K\llangle x,y\rrangle$ is a complete Hopf algebra.
Its counit is
$\varepsilon:\K\llangle x,y\rrangle\to \K$. A formal series $H$ is
\emph{unit-normalized} if $\varepsilon(H)=1$. The standard completed Hopf coproduct is
\[
  \Delta_{\mathrm{st}}(x)=x\otimes1+1\otimes x,
  \qquad
  \Delta_{\mathrm{st}}(y)=y\otimes1+1\otimes y.
\]
The antipode $S$ is, for $H\in \K\llangle x,y\rrangle$,
\[
  S(H)(x,y)=Rev(H(-x,-y)),
\]
where $Rev(-)$ acts by reversing the order of letters in a monomial.

For a monomial $w$ in $x$ and $y$, and a series
$H\in\K\llangle x,y\rrangle$, we write $[w]H\in\K$ for the
coefficient of $w$ in $H$.

\subsection{Ihara product and Ihara bracket}
An element $g\in\K\llangle x,y\rrangle$ is \emph{group-like} if
\begin{align*}
  \Delta_{\mathrm{st}}(g)&=g\otimes g,
  &
  \varepsilon(g)&=1.
\end{align*}
The group-like elements form a group under the standard concatenation product. For $g\in \K\llangle x,y\rrangle$ group-like and $H\in \K \llangle x,y\rrangle$ a
unit-normalized series, define
\begin{equation}\label{eq:ambient-ihara-action}
  (g\Ihara H)(x,y)
  =g(x,y)H\bigl(x,g^{-1}_{\mathrm{conc}}(x,y)yg(x,y)\bigr),
\end{equation}
where $g^{-1}_{\mathrm{conc}}$ denotes the inverse of $g$
for the ordinary concatenation product.
If $H$ is itself group-like then $g\Ihara H$ is also group-like.  The Ihara product $\Ihara$ is associative and has unit $1$ and endows the set of group-like series with a(nother) group structure.

An element $\psi\in \K\llangle x,y\rrangle$ is \emph{Lie-like} if
\[
  \Delta_{\mathrm{st}}(\psi)=\psi\otimes1+1\otimes\psi.
\]
The Lie-like elements form a Lie algebra under the standard (concatenation) commutator bracket $[-,-]$.
For $\psi$ Lie-like define a derivation $d_\psi$ of $\K\llangle x,y\rrangle$ by
\[
  d_\psi(x)=0,\qquad d_\psi(y)=[y,\psi].
\]
The Ihara bracket of Lie-like elements $\psi,\phi\in \K\llangle x,y\rrangle$ is then
\begin{equation}\label{eq:ihara-bracket}
  \{\psi,\phi\}=[\psi,\phi]+d_\psi(\phi)-d_\phi(\psi).
\end{equation}

Both the standard concatenation bracket and the Ihara bracket preserve the weight grading.

\subsection{The Lie algebra \texorpdfstring{$\mathfrak{grt}_1$}{grt1}}
For $n\geq2$, let $\mathfrak t_n$ be the degree-completed Lie algebra on
symbols $t_{ij}=t_{ji}$, $i\neq j$, modulo the relations
\[
 [t_{ij},t_{kl}]=0\quad(i,j,k,l\text{ distinct}),
 \qquad
 [t_{ij},t_{ik}+t_{jk}]=0\quad(i,j,k\text{ distinct}).
\]
Then the \emph{Grothendieck--Teichmüller Lie algebra} $\grt_1(\K)\subset \K\llangle x,y\rrangle$ is the vector space of Lie-like elements $\psi$ that satisfy the equations
\begin{align}
  \psi(x,y)+\psi(y,x)&=0, \label{eq:grt-sym}\\
  \psi(x,y)+\psi(y,-x-y)+\psi(-x-y,x)&=0 ,\label{eq:grt-cycle}\\
  \psi(t_{12},t_{23})-\psi(t_{12},t_{23}+t_{24})
  +\psi(t_{12}+t_{13},t_{24}+t_{34}) & &  \notag\\
  -\psi(t_{13}+t_{23},t_{34})+\psi(t_{23},t_{34}) &= 0 \quad \text{in $\mathfrak t_4$}&
  \label{eq:grt-pentagon}.
\end{align}
We equip $\grt_1(\K)$ with the Ihara bracket \eqref{eq:ihara-bracket}. This makes $\grt_1(\K)$ into a pronilpotent Lie algebra, which is graded by the weight. The weight-$n$ part of $\grt_1(\K)$ is denoted by $\grt_1(\K)_n$. In each degree the defining equations are rational linear equations in a
finite-dimensional vector space.  Therefore
\begin{equation}\label{eq:base-change}
  \grt_1(\K)_n=\grt_1(\Q)_n\otimes_{\Q}\K
\end{equation}
for every characteristic-zero field $\K$.
We shall also use the standard fact that normalized elements exist in every
odd weight.

\begin{lemma}\label{lem:ell-nonzero}
For every \(k\geq1\), there exists a homogeneous element
\(
  \sigma_{2k+1}\in\grt_1(\Q)_{2k+1}
\)
such that
\(
  [x^{2k}y]\sigma_{2k+1}\neq0
\).
\end{lemma}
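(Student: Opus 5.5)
The plan is to go through Drinfeld associators and the torsor structure of associators over $\GRT_1$, which is the classical route. Let $\Phi$ be a Drinfeld associator defined over $\Q$; Drinfeld proved such rational associators exist. The associators with parameter $\mu=1$ (say, over $\C$ or $\Q$) form a torsor for the prounipotent group $\GRT_1$. Its Lie algebra is $\grt_1$ with the Ihara bracket, acting via the Ihara product \eqref{eq:ambient-ihara-action}. Fix $k\ge1$ and the weight $n=2k+1$. The goal is to show that the linear functional $\psi\mapsto[x^{2k}y]\psi$ does not vanish identically on $\grt_1(\Q)_{n}$. By \eqref{eq:base-change}, it is enough to check this over $\C$, or over any field of characteristic zero.

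\textbf{Step 1: transfer to associators.} Consider the KZ associator $\Phi_{KZ}$ and a rational associator $\Phi_\Q$. By the torsor property, there is a unique $g\in\GRT_1(\C)$ with $\Phi_{KZ}=g\Ihara\Phi_\Q$. Write $g=\exp(\psi)$ with $\psi=\sum_{m\ge 3}\psi_m$ and each $\psi_m\in\grt_1(\C)_m$. Now look at the coefficient of $x^{2k}y$, and more generally at the depth-one coefficients $[x^{a}y]$.

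\textbf{Step 2: depth-one coefficients are additive.} Work modulo the ideal of series of depth $\ge2$ in $y$. Here $g^{-1}yg\equiv e^{-\mathrm{ad}\,(\text{depth-0 part of }\log g)}y$. The depth-zero part of a Lie-like $\psi$ is a multiple of $x$, and it vanishes for $\psi\in\grt_1$ since $\psi$ has no linear term. The depth-one part of $g\Ihara H$ therefore becomes the sum of the depth-one parts of $g$ and $H$, up to depth-zero corrections. Those corrections are trivial here, because associators have no pure-$x$ terms. It follows that
\[
[x^{2k}y]\Phi_{KZ}=[x^{2k}y]\Phi_\Q+\bigl[x^{2k}y\bigr]\exp(\psi)=[x^{2k}y]\Phi_\Q+[x^{2k}y]\psi_{2k+1}.
\]
The last equality holds because products of two or more $\psi_m$'s have depth at least $2$, or else contain a pure-$x$ factor, which is zero.

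\textbf{Step 3: compare periods.} The left side is $\pm\zeta(2k+1)$, by the standard formula for depth-one coefficients of $\Phi_{KZ}$. The term $[x^{2k}y]\Phi_\Q$ is rational. If $[x^{2k}y]\psi_{2k+1}$ vanished, we would get $\zeta(2k+1)\in\Q$, which is unknown in general. So this naive route fails as stated. The fix is to use $\Q$-structure instead: $\psi_{2k+1}$ depends on the chosen $\C$-point, and the coefficients of $\Phi_{KZ}$ lie in the $\Q$-algebra generated by MZVs, not in $\C$ freely.

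The better route, which I would actually pursue, avoids transcendence. Associators over $\Q$ form a torsor, so $\Phi_\Q$ can be varied. The depth-one coefficients $[x^{2k}y]\Phi$ of associators over $\Q$ are not constrained by the associator equations in odd weight: the generalized Euler/Drinfeld formula fixes only the even ones. Concretely, I would compute the dimension of the image of the affine space $\{\Phi\ \text{associator}\}$ under $\Phi\mapsto[x^{2k}y]\Phi$ in weight $n$, modulo lower-weight data. If this image is not a single point, Step 2 (applied to $g\in\GRT_1(\Q)$ relating two associators) produces $\psi_{2k+1}\in\grt_1(\Q)$ with nonzero coefficient.

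\textbf{Main obstacle.} The hard part is showing that the odd depth-one coefficient is genuinely free. My first attempt would be to argue this via Drinfeld's existence proof, i.e. the freedom in solving the weight-$n$ obstruction step by step. Failing that, I would exhibit the Ihara-derivation/Soulé-type construction of $\sigma_{2k+1}$ by Ihara and Drinfeld, which starts from $\mathrm{ad}(x)^{2k}y$ and corrects it in higher depth. This is the standard fact being cited.
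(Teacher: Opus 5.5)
There is a genuine gap: your proposal never proves the nonvanishing, and you abandon the argument at exactly the point where it closes.

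Steps~1 and~2 are correct. Depth-one additivity under $\Ihara$ holds because neither $g$ nor $\Phi_\Q$ has pure-$x$ terms. Also, $\exp_{\Ihara}(\psi)\equiv 1+\psi$ modulo depth $\geq 2$, just as for the concatenation exponential. Together with your reduction via \eqref{eq:base-change}, this is precisely the KZ-based construction over $\C$ that the paper cites.

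The misstep is Step~3. For $\Phi_{KZ}$ to lie in $\Ass(\C)$ as defined here (hexagon with $e^{x/2}$), it must be the KZ associator with the $1/(2\pi i)$ normalization. Its weight-$n$ coefficients then lie in $(2\pi i)^{-n}\mathbb R$, so $[x^{2k}y]\Phi_{KZ}=\pm\zeta(2k+1)/(2\pi i)^{2k+1}$ rather than $\pm\zeta(2k+1)$. This is a nonzero, purely imaginary number, so it can never equal the rational number $[x^{2k}y]\Phi_\Q$.

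Simpler still, take $\Phi_\Q=\Phi_{\mathrm{ev}}$ from \cref{lem:even-associator}. Its odd-weight coefficients vanish, so Step~2 gives $[x^{2k}y]\psi_{2k+1}=[x^{2k}y]\Phi_{KZ}\neq 0$, using nothing beyond $\zeta(2k+1)\neq 0$. No irrationality statement is needed. Then $\psi_{2k+1}\in\grt_1(\C)_{2k+1}$ because the defining equations are homogeneous. The functional $[x^{2k}y]$ is defined over $\Q$ and is nonzero on $\grt_1(\Q)_{2k+1}\otimes_\Q\C$, so it is nonzero on $\grt_1(\Q)_{2k+1}$.

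The replacement route you propose is circular. By the torsor property and your own Step~2, non-constancy of $\Phi\mapsto[x^{2k}y]\Phi$ on $\Ass(\Q)$ is equivalent to the existence of $\psi\in\grt_1(\Q)_{2k+1}$ with $[x^{2k}y]\psi\neq 0$. That is the lemma itself. Likewise, the freedom in the weight-by-weight construction of rational associators is, in weight $2k+1$, exactly $\grt_1(\Q)_{2k+1}$, so appealing to it gives nothing new.

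Your last resort, citing the Ihara--Drinfeld construction, is legitimate as a citation, and is essentially what the paper's proof does. But in that case Steps~1--3 do not amount to an argument on their own.
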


\begin{proof}[Reference]
This is due to Drinfeld \cite{Drinfeld}.  A construction from the
Knizhnik--Zamolodchikov associator is described in
\cite[Chapter~5]{Willwacher}, for $\C$ replacing $\Q$. But the defining equations of \(\grt_1\) are
rational in each weight, so the existenvce of an element $\sigma_{2k+1}\in \grt_1(\C)$, together with
\eqref{eq:base-change}, implies the existence of an element over \(\Q\) as well.
\end{proof}

If $\mathfrak h$ is a degree-complete pronilpotent Lie algebra,
let $\operatorname{Exp}(\mathfrak h)$ denote the corresponding prounipotent
group. The \emph{Grothendieck-Teichmüller group} is the prounipotent group
\[
  \GRT_1(\K)=\operatorname{Exp}(\grt_1(\K))\subset \K\llangle x,y\rrangle.
\]
Its group law is the Ihara product and its Lie algebra is $\grt_1(\K)$.

\subsection{Drinfeld associators}
A group-like element $\Phi\in\K\llangle x,y\rrangle$ is a \emph{Drinfeld associator} if it satisfies the equations
\begin{align}
  \Phi(x,y)\Phi(y,x)&=1,\label{eq:ass-sym}\\
  e^{z/2}\Phi(x,y)e^{x/2}\Phi(y,z)e^{y/2}\Phi(z,x)&=1
       \qquad(\text{with $z:=-x-y$}),\label{eq:ass-hex}\\
  \Phi(t_{12},t_{23}+t_{24})\Phi(t_{13}+t_{23},t_{34})
  &=\Phi(t_{23},t_{34})\Phi(t_{12}+t_{13},t_{24}+t_{34})
    \Phi(t_{12},t_{23}).\label{eq:ass-pent}
\end{align}
We write $\Ass(\K)\subset\K\llangle x,y\rrangle$ for the set of Drinfeld associators.
The prounipotent group $\GRT_1(\K)$ acts freely and transitively on
$\Ass(\K)$ \cite{Drinfeld}; its action is the restriction of
\eqref{eq:ambient-ihara-action}.

A Drinfeld associator $\Phi$ is \emph{even} if $\Phi(x,y)=\Phi(-x,-y)$. Equivalently, this means that the coefficients in $\Phi$ of all monomials in $x,y$ of odd length vanish.

\begin{lemma}\label{lem:even-associator}
There exists a rational even Drinfeld associator, i.e.,
$\Phi_{\mathrm{ev}}\in\Ass(\Q)$ such that
$\Phi_{\mathrm{ev}}(-x,-y)=\Phi_{\mathrm{ev}}(x,y)$.
\end{lemma}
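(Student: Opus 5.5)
Existence of a rational associator is Drinfeld's theorem \cite{Drinfeld}. The plan is to take any $\Phi\in\Ass(\Q)$ and symmetrize it using the free transitive action of $\GRT_1(\Q)$ together with the involution $\tau:H(x,y)\mapsto H(-x,-y)$.

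\emph{Step 1: $\tau$ is compatible with everything.} First I will check that $\tau$ maps $\Ass(\Q)$ to itself. Equations \eqref{eq:ass-sym} and \eqref{eq:ass-pent} are homogeneous in the variables, so they are invariant under $x,y,t_{ij}\mapsto -x,-y,-t_{ij}$; the latter substitution is an automorphism of $\mathfrak t_4$. For the hexagon \eqref{eq:ass-hex}, applying $\tau$ gives
\[
e^{-z/2}\Phi'(x,y)e^{-x/2}\Phi'(y,z)e^{-y/2}\Phi'(z,x)=1,\qquad \Phi'=\tau\Phi .
\]
I will rewrite this as the original hexagon for $\Phi'$ by inverting the identity and using $\Phi'(a,b)^{-1}=\Phi'(b,a)$ from \eqref{eq:ass-sym}. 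This yields $\Phi'(x,z)e^{y/2}\Phi'(z,y)e^{x/2}\Phi'(y,x)e^{z/2}=1$. That is the hexagon with the roles of $x$ and $y$ exchanged, after a cyclic conjugation, and since $z$ is symmetric in $x,y$ it is equivalent to the standard form. This bookkeeping is the one step needing care.

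Next, $\tau$ is an automorphism for the Ihara product: $\tau(g\Ihara H)=\tau g\Ihara\tau H$, directly from \eqref{eq:ambient-ihara-action}. It also preserves $\GRT_1(\Q)$, because the defining equations \eqref{eq:grt-sym}--\eqref{eq:grt-pentagon} are homogeneous linear equations. On $\grt_1$ it acts by $(-1)^{n}$ in weight $n$.

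\emph{Step 2: averaging.} By freeness and transitivity there is a unique $g\in\GRT_1(\Q)$ with $\tau\Phi=g\Ihara\Phi$. Applying $\tau$ gives $\Phi=\tau g\Ihara g\Ihara\Phi$, hence $\tau g\Ihara g=1$. Write $g=\exp(\psi)$, with the exponential taken with respect to the Ihara product. Since $\tau$ is an automorphism of the prounipotent group, $\tau g=\exp(\tau\psi)$, so the relation $\tau g=g^{-1}$ becomes $\tau\psi=-\psi$.

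Now set $h=\exp(\psi/2)$ and $\Phi_{\mathrm{ev}}=h\Ihara\Phi$. Then
\[
\tau\Phi_{\mathrm{ev}}=\tau h\Ihara\tau\Phi=\exp(-\psi/2)\Ihara\exp(\psi)\Ihara\Phi=\exp(\psi/2)\Ihara\Phi=\Phi_{\mathrm{ev}} .
\]
So $\Phi_{\mathrm{ev}}\in\Ass(\Q)$ is even, as required. Everything is rational, since the exponential and logarithm are defined over $\Q$ in the prounipotent setting.

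The main obstacle is the hexagon check in Step 1; the remaining steps are formal.
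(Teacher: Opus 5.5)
\textbf{There is a gap in Step 1: the hexagon check is wrong.} The other parts of Step 1 are correct: compatibility of $\tau$ with \eqref{eq:ass-sym}, \eqref{eq:ass-pent}, $\GRT_1(\Q)$, and $\tau(g\Ihara H)=\tau g\Ihara\tau H$. The square-root averaging in Step 2 is also correct. Together these would reduce the lemma to the existence of one rational associator plus the torsor property, which is more explicit than the paper's bare citation of Drinfeld and Alekseev--Podkopaeva--\v{S}evera.

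The problem is exactly the step you flagged. The identity you obtain,
\[
  \Phi'(x,z)e^{y/2}\Phi'(z,y)e^{x/2}\Phi'(y,x)e^{z/2}=1,
\]
is not a relabeling of \eqref{eq:ass-hex}. Exchanging $x$ and $y$ in \eqref{eq:ass-hex} and conjugating cyclically gives
\[
  \Phi'(x,z)e^{x/2}\Phi'(z,y)e^{z/2}\Phi'(y,x)e^{y/2}=1,
\]
which has the exponentials in different slots. In \eqref{eq:ass-hex} each factor $\Phi(a,b)$ is (cyclically) followed by $e^{a/2}$. In your identity it is followed by $e^{c/2}$, where $c$ is the third variable. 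Every permutation of $x,y,z$ and every cyclic conjugation preserves this pattern, so no relabeling helps. Inverting and using \eqref{eq:ass-sym} only converts the pattern ``first variable, sign $+$'' into ``third variable, sign $-$''. So your manipulation just rewrites the $\tau$-transformed hexagon (i.e.\ the one with $\mu=-1$) in an equivalent form. The argument is circular, and $\tau(\Ass(\Q))\subset\Ass(\Q)$ remains unproved.

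What you actually need is that, given the symmetry relation, the hexagon with $e^{\mu t/2}$ is equivalent to the one with $e^{-\mu t/2}$. Conceptually, the reverse braiding $R\mapsto R_{21}^{-1}$ satisfies the hexagon axioms iff $R$ does. In Drinfeld's standard normalization
\[
  e^{A/2}\Phi(C,A)e^{C/2}\Phi(B,C)e^{B/2}\Phi(A,B)=1,\qquad A+B+C=0,
\]
your trick does go through, but with the relabeling $A\leftrightarrow C$ (fixing $B$). Applying $\tau$, inverting and using symmetry gives $\Phi'(B,A)e^{B/2}\Phi'(C,B)e^{C/2}\Phi'(A,C)e^{A/2}=1$. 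This is the cyclic conjugate of the hexagon for $\Phi'$ with $A$ and $C$ exchanged.

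Note that \eqref{eq:ass-hex} as displayed is not of this shape. Between $\Phi(x,y)$ and $\Phi(y,z)$ it has $e^{x/2}$, not the exponential of the shared variable $y$, so the inversion trick cannot close up there. To finish Step 1 you must either work with the standard normalization of the hexagon, or cite the $\pm\mu$ equivalence, e.g.\ via Furusho's theorem that the pentagon forces both hexagon equations with $\mu$ determined only up to sign. With that in place, the rest of your argument is fine.
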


\begin{proof}
This is part of Drinfeld's existence theorem for associators
\cite{Drinfeld}, see also \cite{APS}.  We fix one such associator for the rest of the paper.
\end{proof}


Below we will use the following elementary and well-known property of associators and elements of $\GRT_1(\K)$.

\begin{lemma}\label{lem:no-pure-words}
For every $\Phi\in\Ass(\K)$ and every $g\in\GRT_1(\K)$,
\[
  \Phi(x,0)=\Phi(0,y)=g(x,0)=g(0,y)=1.
\]
In other words, the coefficients of the words of the form $x^n$ or $y^n$ (with $n\geq 1$) in $\Phi$ or $g$
vanish.
\end{lemma}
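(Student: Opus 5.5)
Setting $y=0$ or $x=0$ in the defining equations reduces each series to a group-like (or Lie-like) series in a single variable. Such a series is the exponential of a multiple of that variable, and the antisymmetry relations \eqref{eq:ass-sym} resp.\ \eqref{eq:grt-sym} force the multiple to be zero.

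\emph{Associators.} Substitution $y\mapsto 0$ is a continuous Hopf algebra map $\K\llangle x,y\rrangle\to\K[[x]]$, so $\Phi(x,0)$ is group-like in $\K[[x]]$. The group-like elements of $\K[[x]]$ with the coproduct $\Delta_{\mathrm{st}}(x)=x\otimes 1+1\otimes x$ are exactly $e^{\lambda x}$ with $\lambda\in\K$. Hence $\Phi(x,0)=e^{\lambda x}$ and, likewise, $\Phi(0,y)=e^{\mu y}$. Now set $y=0$ in \eqref{eq:ass-sym}, i.e.\ substitute $(x,y)\mapsto(x,0)$ in $\Phi(x,y)\Phi(y,x)=1$. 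This gives $e^{\lambda x}e^{\mu x}=1$, so $\mu=-\lambda$. To pin down $\lambda$ itself, look at the weight-one part of $\Phi$, which is $\lambda x+\mu y$. Group-likeness is automatic there, so I use the hexagon \eqref{eq:ass-hex} instead. In weight one, with $z=-x-y$, the hexagon reads
\[ z/2+x/2+y/2+(\lambda x+\mu y)+(\lambda y+\mu z)+(\lambda z+\mu x)=0. \]
The exponential terms sum to $0$ because $x+y+z=0$. The remaining terms give $(\lambda+\mu)(x+y+z)=0$, which is automatic as well, so the hexagon gives no information in weight one.

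So I instead use symmetry in weight one directly. Relation \eqref{eq:ass-sym} in weight one gives $\lambda x+\mu y+\lambda y+\mu x=0$, hence $\mu=-\lambda$, the same as before. It remains to determine $\lambda$, and here I use the pentagon \eqref{eq:ass-pent} in weight one. The left side contributes
\[ \lambda t_{12}+\mu(t_{23}+t_{24})+\lambda(t_{13}+t_{23})+\mu t_{34}, \]
and the right side contributes
\[ \lambda t_{23}+\mu t_{34}+\lambda(t_{12}+t_{13})+\mu(t_{24}+t_{34})+\lambda t_{12}+\mu t_{23}. \]
Comparing the coefficients of $t_{12}$ gives $\lambda=2\lambda$, so $\lambda=0$. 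Then $\mu=-\lambda=0$ as well, and therefore $\Phi(x,0)=\Phi(0,y)=1$. This weight-one pentagon comparison is the only step that needs care; the rest of the associator case is formal.

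\emph{Elements of $\GRT_1(\K)$.} Write $g=\exp_{\Ihara}(\psi)$ with $\psi\in\grt_1(\K)$. Since $\psi$ is Lie-like, $\psi(x,0)$ is a Lie-like series in one variable, hence equal to $a x$ for some $a\in\K$. Similarly $\psi(0,y)=b y$. Now $\psi$ has no weight-one component: in weight one the pentagon \eqref{eq:grt-pentagon} gives, by the same $t_{12}$-coefficient comparison as above, $a=0$, and then \eqref{eq:grt-sym} gives $b=-a=0$. So $\psi(x,0)=\psi(0,y)=0$.

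Next I pass from the Lie algebra to the group, using the fact that the substitutions $y\mapsto0$ and $x\mapsto0$ are compatible with the Ihara product. Setting $y=0$ in \eqref{eq:ambient-ihara-action} gives $(g\Ihara H)(x,0)=g(x,0)H(x,0)$, since $g^{-1}_{\mathrm{conc}}\cdot 0\cdot g=0$. Setting $x=0$ gives $(g\Ihara H)(0,y)=g(0,y)H(0,g(0,y)^{-1}yg(0,y))=g(0,y)H(0,y)$, because one-variable series commute. Hence both substitutions are group homomorphisms to one-variable series with the ordinary product. The induced Lie algebra maps are $\psi\mapsto\psi(x,0)$ and $\psi\mapsto\psi(0,y)$, which we have just shown to be zero. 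Consequently $g(x,0)=g(0,y)=1$ for every $g\in\GRT_1(\K)$.

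Alternatively, $g=\Phi'\Ihara\Phi^{-1}$ for associators $\Phi,\Phi'$, and the homomorphism property together with the associator case gives the same conclusion directly.
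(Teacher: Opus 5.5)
Your proof is correct and is essentially the paper's argument. A one-variable group-like (resp.\ Lie-like) series is $e^{\lambda x}$ (resp.\ $ax$), symmetry gives $\mu=-\lambda$, and the weight-one pentagon, which amounts to $\lambda t_{12}+\mu t_{34}=0$ in the freely spanned degree-one part of $\mathfrak t_4$, gives $\lambda=0$. The only difference is how you pass from $\grt_1$ to $\GRT_1$: you use that $g\mapsto g(x,0)$ and $g\mapsto g(0,y)$ are homomorphisms for $\Ihara$, whereas the paper simply notes that $\grt_1(\K)_1=0$ forces every element of $\GRT_1(\K)$ to have zero linear part.

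Two presentation points, neither affecting correctness: your opening sentence wrongly credits the vanishing to antisymmetry alone, and the hexagon detour can be deleted.
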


\begin{proof}
A group-like series in one variable is of the form $\exp(cx)$.
It therefore suffices to prove that the linear coefficients vanish.

Let $\Phi_1=ax+by$ be the degree-one part of an associator.
The symmetry equation gives $b=-a$, and the degree-one part of
the pentagon equation gives
\[
  a(t_{12}-t_{34})=0
  \qquad\text{in }\mathfrak t_4.
\]
Hence $a=b=0$.

The same calculation applied to a degree-one element of
$\mathfrak{grt}_1(\K)$ shows that
$\mathfrak{grt}_1(\K)_1=0$. Since
$\GRT_1(\K)=\operatorname{Exp}(\mathfrak{grt}_1(\K))$,
every element of $\GRT_1(\K)$ has vanishing linear part.
The one-variable restrictions are therefore equal to $1$.
\end{proof}

\section{The free group and coefficient functions}\label{sec:coordinates}

\subsection{The free (source) group and its coordinate algebra}

Let $s_3,s_5,s_7,\ldots$ be symbols and denote the complete weight-graded free Lie algebra and its exponential group by 
\[
  \fhat=\Lhat_{\K}(s_3,s_5,s_7,\ldots),
  \qquad F=\operatorname{Exp}(\fhat).
\]
Here $s_{m}$ is defined to have weight $m$.

\begin{definition}\label{def:normalized-family}
A normalized family over $\K$ is a collection
\[
  (\sigma_m)_{m\geq3,\ m\text{ odd}},
  \qquad \sigma_m\in\grt_1(\K)_m,
  \qquad [x^{m-1}y]\sigma_m=1.
\]
\end{definition}

For a fixed such family, the universal property of $\fhat$ gives a continuous graded Lie algebra homomorphism
\begin{equation}\label{eq:source-lie-map}
  \rho_{\mathrm{Lie}}:\fhat\longrightarrow\grt_1(\K),
  \qquad s_m\longmapsto\sigma_m,
\end{equation}
and exponentiation gives a homomorphism of prounipotent groups
\begin{equation}\label{eq:source-group-map}
  \rho_F:F\longrightarrow\GRT_1(\K).
\end{equation}
We will call $F$ the \emph{source group} because it is the source of the map $\rho_F$.
The coordinate Hopf algebra of $F$ is the shuffle algebra
\begin{equation}\label{eq:coordinate-ring}
  \mathcal U_{\K}=\mathcal O(F)
  =\K\langle f_3,f_5,f_7,\ldots\rangle_{\shuffle},
\end{equation}
that we will call the \emph{source algebra}.
Here $f_m$ is dual to the generator $s_m$ and normalized by $f_m(\exp(t s_n))=t \delta_{m,n}$.
As a vector space $\mathcal U_{\K}$ has a basis consisting of the words $f_{i_1}\cdots f_{i_r}$.
Here juxtaposition denotes a basis word, while multiplication is the shuffle product.
The coproduct and counit are
\begin{align}
  \Delta(f_{i_1}\cdots f_{i_r})
  &=\sum_{j=0}^{r}f_{i_1}\cdots f_{i_j}\otimes
                    f_{i_{j+1}}\cdots f_{i_r},\label{eq:deconcat-coproduct}\\
  \varepsilon(f_{i_1}\cdots f_{i_r})
  &=\begin{cases}1,&r=0,\\0,&r>0.\end{cases}
\end{align}
Thus $(\Delta A)(h_1,h_2)=A(h_1h_2)$ for any $A\in \mathcal U_{\K}$, $h_1,h_2\in F$.  We write $1_F$ for the identity
of $F$. Evaluation at the identity is the counit: $A(1_F)=\varepsilon(A)$.

The weight of a word $f_{i_1}\cdots f_{i_r}\in \mathcal U_{\K}$ is $i_1+\cdots+i_r$.  Let
$\mathcal U_{\K,N}$ be the homogeneous part of weight $N$, and put
$W_{\leq N}\mathcal U_\K=\bigoplus_{j\leq N}\mathcal U_{\K,j}$.
For every odd $m\geq3$, define
\begin{equation}\label{eq:lambda}
  \lambda_m(A)=[f_m]A\in \K,
  \qquad
  \partial_m=(\lambda_m\otimes\operatorname{id})\Delta \colon \mathcal U_{\K}\to\mathcal U_{\K}.
\end{equation}
Equivalently,
\[
  \partial_m(f_{i_1}\cdots f_{i_r})=
  \begin{cases}
    f_{i_2}\cdots f_{i_r},&i_1=m,\\
    0,&i_1\neq m.
  \end{cases}
\]
For a word $u=m_1\cdots m_r$ in the alphabet $\{3,5,7,\dots\}$ set
\begin{equation}\label{eq:partial-word}
  \partial_u=\partial_{m_1}\circ\cdots\circ\partial_{m_r}.
\end{equation}

\subsection{Scalar coefficients and orbit functions}

For a word $w$ in $x,y$, the symbol $[w]H\in \K$ denotes its coefficient in
$H\in\K\llangle x,y\rrangle$.  Put $x_0=x$ and $x_1=y$.  For $N>0$ and
$a_0,\ldots,a_{N+1}\in\{0,1\}$, define the numbers
\begin{align}
  I_H(0;a_1,\ldots,a_N;1)
  &:=[x_{a_N}\cdots x_{a_1}]H,\label{eq:I-01}\\
  I_H(1;a_1,\ldots,a_N;0)
  &:=(-1)^N I_H(0;a_N,\ldots,a_1;1),\label{eq:I-reflection}\\
  I_H(a;a_1,\ldots,a_N;a)&:=0.\label{eq:I-equal-endpoints}
\end{align}
For the word with empty interior put
\begin{equation}\label{eq:I-empty}
  I_H(a_0;\,;a_1):=\varepsilon(H).
\end{equation}
The endpoints are not letters of the coefficient word.  In particular,
\begin{equation}\label{eq:leading-I}
  I_H(0;1,0^{N-1};1)=[x^{N-1}y]H.
\end{equation}

Here $0^q$ means a string of $q$ zeros, with $0^0$ the empty string.
For a finite nonempty sequence $\mathbf n=(n_1,\ldots,n_r)$ of positive
integers, put
\[
  \beta(\mathbf n)
  =1,0^{n_1-1},\ldots,1,0^{n_r-1}
\]
and define
\begin{equation}\label{eq:Z-Psi-def}
  Z_H(n_1,\ldots,n_r):=I_H(0;\beta(\mathbf n);1),
  \qquad Z_H(\varnothing):=\varepsilon(H).
\end{equation}
All quantities defined so far in this definition are scalars in $\K$.

Next let $\Phi_0\in\K\llangle x,y\rrangle$ be unit-normalized. Define functions on the group $F$ of the preceding subsection
by
\begin{align}
  \mathcal I_{\Phi_0,\rho}(a_0;a_1,\ldots,a_N;a_{N+1})(h)
  &:=I_{\rho_F(h)\Ihara\Phi_0}
       (a_0;a_1,\ldots,a_N;a_{N+1}),\label{eq:torsor-I}\\
  \mathcal I^F_\rho(a_0;a_1,\ldots,a_N;a_{N+1})(h)
  &:=I_{\rho_F(h)}(a_0;a_1,\ldots,a_N;a_{N+1})\label{eq:group-I}
\end{align}
for $h\in F$. Note that here we use the map $\rho_F$ from \eqref{eq:source-group-map}, and this in particular depends on the choice of the normalized family $(\sigma_m)_{m\geq3,\ m\text{ odd}}$, or equivalently the map $\rho_{\mathrm{Lie}}$ of \eqref{eq:source-lie-map}. We furthermore use here the convention that we denote the function in $\mathcal U_\K$ by the calligraphic letter $\mathcal I$, and the corresponding coefficients (numbers) by the regular letter $I$.

Because $\rho_{\mathrm{Lie}}$ is graded, a coefficient with $N$ interior
symbols satisfies
\begin{equation}\label{eq:weight-bound-general}
  \mathcal I^F_\rho(a_0;a_1,\ldots,a_N;a_{N+1})\in\mathcal U_{\K,N},
  \qquad
  \mathcal I_{\Phi_0,\rho}(a_0;a_1,\ldots,a_N;a_{N+1})
  \in W_{\leq N}\mathcal U_\K.
\end{equation}
For odd $m$,
\begin{equation}\label{eq:lambda-infinitesimal}
  \lambda_m \mathcal I^F_\rho(a_0;a_1,\ldots,a_m;a_{m+1})
  =I_{\sigma_m}(a_0;a_1,\ldots,a_m;a_{m+1}) \in \K.
\end{equation}

\subsection{The general cut formula}

\begin{proposition}[Cut formula for the coproduct]
\label{prop:cut-formula}
Let $\Phi_0\in\K\llangle x,y\rrangle$ be any unit-normalized formal series.  Fix
$N\geq0$ and $a_1,\ldots,a_N\in\{0,1\}$, and put
$a_0=0$, $a_{N+1}=1$.  Then
\begin{align}
&\Delta \mathcal I_{\Phi_0,\rho}(0;a_1,\ldots,a_N;1)\notag\\
&=\sum_{r=0}^{N}
  \sum_{0=i_0<i_1<\cdots<i_r<i_{r+1}=N+1}
  \left(\prod_{q=0}^{r}
  \mathcal I^F_\rho(a_{i_q};a_{i_q+1},\ldots,
                 a_{i_{q+1}-1};a_{i_{q+1}})\right)\notag\\
&\hspace{55mm}\otimes
  \mathcal I_{\Phi_0,\rho}(0;a_{i_1},\ldots,a_{i_r};1) \in \mathcal U_\K \otimes  \mathcal U_\K .
\label{eq:coaction}
\end{align}
Consequently, for every odd $m\geq3$,
\begin{align}
&\partial_m \mathcal I_{\Phi_0,\rho}(0;a_1,\ldots,a_N;1)\notag\\
&=\sum_{q=0}^{N-m}
  I_{\sigma_m}(a_q;a_{q+1},\ldots,a_{q+m};a_{q+m+1})\notag\\
&\hspace{28mm}\cdot
  \mathcal I_{\Phi_0,\rho}
  (0;a_1,\ldots,a_q,a_{q+m+1},\ldots,a_N;1) \in \mathcal U_\K,
\label{eq:contiguous-cut}
\end{align}
where the sum is empty if $N<m$.
\end{proposition}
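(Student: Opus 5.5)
The plan is to reduce \eqref{eq:coaction} to a pointwise identity on $F\times F$ and then prove that identity by a Goncharov-type expansion of the Ihara action \eqref{eq:ambient-ihara-action}. Since $(\Delta A)(h_1,h_2)=A(h_1h_2)$, and products in $\mathcal U_\K$ evaluate pointwise, it suffices to show for all $h_1,h_2\in F$ that $\mathcal I_{\Phi_0,\rho}(0;a_1,\dots,a_N;1)(h_1h_2)$ equals the right-hand side of \eqref{eq:coaction} with the left tensor factors evaluated at $h_1$ and the right ones at $h_2$.

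\textbf{Step 1 (reduction).} Because $\rho_F$ is a group homomorphism for the Ihara product and $\Ihara$ is associative,
\[
\rho_F(h_1h_2)\Ihara\Phi_0=g\Ihara H,\qquad g:=\rho_F(h_1),\quad H:=\rho_F(h_2)\Ihara\Phi_0,
\]
where $g$ is group-like and $H$ is unit-normalized. So the whole statement reduces to the scalar identity
\[
I_{g\Ihara H}(0;a_1,\dots,a_N;1)=\sum_{r}\sum_{0=i_0<\dots<i_{r+1}=N+1}\prod_{q=0}^r I_g(a_{i_q};a_{i_q+1},\dots,a_{i_{q+1}-1};a_{i_{q+1}})\cdot I_H(0;a_{i_1},\dots,a_{i_r};1).
\]
This is to hold for every group-like $g$ and every unit-normalized $H$.

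\textbf{Step 2 (the combinatorial core).} Write $g\Ihara H=g\,H(x,g^{-1}yg)$ and expand $H$ into words. Each letter $x$ of an $H$-word stays a single $x$, and each letter $y$ becomes $g^{-1}yg$. A word of $g\Ihara H$ is therefore a concatenation of "kept letters", coming one each from the letters of the $H$-word, and filler segments coming from $g$ or $g^{-1}$.

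\begin{itemize}
\item The kept positions are exactly the subsequence $a_{i_1},\dots,a_{i_r}$. Recall that by \eqref{eq:I-01} $a_1$ is the rightmost letter.
\item Between two consecutive kept letters (including the virtual endpoints $a_0=0$ and $a_{N+1}=1$) the filler depends on the two endpoint types:
\begin{itemize}
\item $y\ldots y$ gives $g\cdot g^{-1}=1$;
\item $x\ldots x$ gives nothing;
\item the two mixed cases give $g$ or $g^{-1}$.
\end{itemize}
\item I must check these against the conventions:
\begin{itemize}
\item equal endpoints give coefficient $0$ for a nonempty interior and $\varepsilon=1$ for an empty one, matching \eqref{eq:I-equal-endpoints} and \eqref{eq:I-empty};
\item the segments from $g$ match \eqref{eq:I-01};
\item the segments from $g^{-1}=S(g)$ (valid since $g$ is group-like) have coefficient of $w$ equal to $(-1)^{|w|}[\mathrm{Rev}\,w]g$, which is exactly the reflection rule \eqref{eq:I-reflection}.
\end{itemize}
\end{itemize}

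The main obstacle I expect is this bookkeeping. One has to check the orientation conventions (reversed reading order, which endpoint corresponds to $g$ versus $g^{-1}$, the leading $g$ and the boundary cases at $a_0$ and $a_{N+1}$) and confirm that no other decompositions occur. I would do this by directly expanding a product $g\,x_{b_1}\cdots$ with each $y$ replaced by $g^{-1}yg$, and then grouping letters between consecutive non-filler letters.

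\textbf{Step 3 (derivation formula).} Apply $\lambda_m\otimes\mathrm{id}$ to \eqref{eq:coaction}. The functional $\lambda_m$ vanishes on shuffle products of two elements of the augmentation ideal, since such products consist of words of length $\ge2$, and it vanishes on constants. A factor $\mathcal I^F_\rho$ with empty interior is the constant $1$. A factor with nonempty interior of length $\ell$ lies in $\mathcal U_{\K,\ell}$ with $\ell>0$ by \eqref{eq:weight-bound-general}, so it is in the augmentation ideal; moreover $\lambda_m$ sees only weight $m$. Hence a term survives only if exactly one gap has nonempty interior and that interior has length exactly $m$, i.e.\ a contiguous block $a_{q+1},\dots,a_{q+m}$. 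For that term, \eqref{eq:lambda-infinitesimal} gives the coefficient $I_{\sigma_m}(a_q;\dots;a_{q+m+1})$, and the remaining right factor is $\mathcal I_{\Phi_0,\rho}$ of the deleted sequence. This yields \eqref{eq:contiguous-cut}, with an empty sum when $N<m$.
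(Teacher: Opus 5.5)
Your proposal is correct and follows essentially the same route as the paper. Both arguments rest on the same three ingredients:
\begin{enumerate}
\item the reduction $\rho_F(h_1h_2)\Ihara\Phi_0=\rho_F(h_1)\Ihara(\rho_F(h_2)\Ihara\Phi_0)$ to a scalar block-expansion identity for group-like $g$ and unit-normalized $H$;
\item the observation that the filler between consecutive kept letters is $g$, $g^{-1}_{\mathrm{conc}}=S(g)$, or $1$ according to the endpoint letters, which yields exactly the conventions for $I_g$, including the reflection rule;
\item an application of $\lambda_m\otimes\operatorname{id}$, where only a single nonconstant factor of weight exactly $m$ can contribute.
\end{enumerate}
The only difference is that you perform the reduction before proving the scalar identity, while the paper proves the identity first; this ordering is immaterial.
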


\begin{proof}
We first prove a coefficient identity in the ambient series algebra.  Let
$g\in\K\llangle x,y\rrangle$ be group-like and let $H\in\K\llangle x,y\rrangle$ be unit-normalized.  Expanding
\eqref{eq:ambient-ihara-action} gives
\begin{align}
&I_{g\Ihara H}(0;a_1,\ldots,a_N;1)\notag\\
&=\sum_{r=0}^{N}
  \sum_{0=i_0<i_1<\cdots<i_r<i_{r+1}=N+1}
  \left(\prod_{q=0}^{r}
  I_g(a_{i_q};a_{i_q+1},\ldots,
              a_{i_{q+1}-1};a_{i_{q+1}})\right)
  I_H(0;a_{i_1},\ldots,a_{i_r};1).
\label{eq:universal-coefficient-identity}
\end{align}
To see this directly, first reduce by linearity to a single word of $H$.
In that word, replace every $x$ by $x$ and every $y$ by
$g^{-1}_{\mathrm{conc}}yg$, and multiply by the initial factor $g$. In the word(s) thus produced, choose
the positions
$i_1<\cdots<i_r$ of the distinguished $x$ or $y$ letters supplied by the
chosen word of $H$. (I.e., those letters that were not created by the insertions of copies of $g$ or $g^{-1}_{\mathrm{conc}}$.)  The letters between two consecutive distinguished
letters form one contiguous block.  After adjacent factors $g$ and
$g^{-1}_{\mathrm{conc}}$ cancel, such a block is supplied by $g$, by
$g^{-1}_{\mathrm{conc}}$, or by the identity, according to its two endpoint
symbols.  Since $g$ is group-like, $g^{-1}_{\mathrm{conc}}=S(g)$ for the
standard antipode, and
\[
  S(x_{b_1}\cdots x_{b_s})=(-1)^s x_{b_s}\cdots x_{b_1}.
\]
Consequently the block coefficient is exactly the corresponding quantity
$I_g(\cdots)$; a nonempty block with equal endpoints
comes from the identity and is zero.  Multiplying all block coefficients
and summing over the retained positions proves
\eqref{eq:universal-coefficient-identity}.

For $h_1,h_2\in F$, associativity of the action and the homomorphism property
of $\rho_F$ give
\[
  \rho_F(h_1h_2)\Ihara\Phi_0
  =\rho_F(h_1)\Ihara(\rho_F(h_2)\Ihara\Phi_0).
\]
Apply \eqref{eq:universal-coefficient-identity} with
$g=\rho_F(h_1)$ and $H=\rho_F(h_2)\Ihara\Phi_0$.  By the definition of the coproduct on $\mathcal U_\K$, this is \eqref{eq:coaction}.

Finally apply $(\lambda_m\otimes\operatorname{id})$ to
\eqref{eq:coaction}.  A one-letter word in a shuffle product can come from
exactly one nonconstant factor.  By \eqref{eq:weight-bound-general}, that
factor must correspond to one complementary block with exactly $m$
interior symbols.  Equation \eqref{eq:lambda-infinitesimal} then gives
\eqref{eq:contiguous-cut}.
\end{proof}

\begin{example}[The case of two interior symbols]
For $N=2$, formula \eqref{eq:coaction} reads
\begin{align*}
\Delta \mathcal I_{\Phi_0,\rho}(0;a_1,a_2;1)
={}&\mathcal I^F_\rho(0;a_1,a_2;1)\otimes1\\
&+\mathcal I^F_\rho(a_1;a_2;1)\otimes \mathcal I_{\Phi_0,\rho}(0;a_1;1)\\
&+\mathcal I^F_\rho(0;a_1;a_2)\otimes \mathcal I_{\Phi_0,\rho}(0;a_2;1)\\
&+1\otimes \mathcal I_{\Phi_0,\rho}(0;a_1,a_2;1).
\end{align*}
The four terms correspond respectively to retaining from the second factor
none, only $a_1$, only $a_2$, or both interior symbols.
\end{example}

\subsection{Elementary source-algebra facts}

\begin{lemma}[Kernel lemma]\label{lem:kernel}
Let $A\in W_{\leq N}\mathcal U_\K$.  If $\varepsilon(A)=0$ and
$\partial_mA=0$ for every odd $m<N$, then
\[
  A=\begin{cases}
      c f_N,&N\text{ odd},\\
      0,&N\text{ even},
    \end{cases}
\]
for some $c\in \K$.
\end{lemma}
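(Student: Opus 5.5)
Write $A=\sum_w c_w\, w$ as a finite linear combination of basis words $w=f_{i_1}\cdots f_{i_r}$ of weight $i_1+\cdots+i_r\le N$. The whole argument is a bookkeeping of which first letters can occur in such words, using the explicit description of $\partial_m$ as deletion of a leading letter $f_m$.

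First, the hypothesis $\varepsilon(A)=0$ says exactly that the coefficient of the empty word vanishes, so every word in $A$ with $c_w\ne0$ has length $r\ge1$ and therefore a well-defined first letter $f_{i_1}$.

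Second, I group the words by their first letter: $A=\sum_{m}f_m\cdot A_m$, where juxtaposition means concatenation, not shuffle. By the formula for $\partial_m$ on basis words, $\partial_m A=A_m$, and distinct words $w$ with first letter $f_m$ give distinct tails. Hence $\partial_mA=0$ forces every coefficient $c_w$ with $i_1=m$ to vanish. This applies to every odd $m<N$, so the only words that can survive have first letter $f_{i_1}$ with $i_1\ge N$.

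Third, the weight bound $i_1+\cdots+i_r\le N$ together with $i_j\ge3$ for all $j$ excludes $r\ge2$ when $i_1\ge N$, since then the weight would be at least $N+3$. It also forces $i_1=N$, and then $r=1$. Since the indices are odd, this single word $f_N$ is only available when $N$ is odd. Thus $A=cf_N$ for $N$ odd and $A=0$ for $N$ even.

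There is no real obstacle. The only point needing care is that $\partial_m$ acts on the concatenation basis, not through the shuffle product. This is precisely the equivalent formula for $\partial_m$ given after \eqref{eq:lambda}, so the coefficients can be read off word by word.
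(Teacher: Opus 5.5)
Your proof is correct and follows the paper's own argument. You expand $A$ in the word basis, use $\varepsilon(A)=0$ to remove the empty word, and use $\partial_m$ for odd $m<N$ to remove every word beginning with $f_m$; the weight bound then leaves only the one-letter word $f_N$, which exists only when $N$ is odd.
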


\begin{proof}
Expand $A$ in the word basis.  Every nonempty word of weight at most $N$
starts with some $f_m$ with $m<N$, except for the one-letter word $f_N$.
The coefficient of every tail following $f_m$ is detected by $\partial_m$,
and the empty-word coefficient is $\varepsilon(A)$.
\end{proof}

For $\xi\in\fhat$, define
\begin{equation}\label{eq:Dxi}
  (D_\xi A)(h)=\left.\frac{d}{dt}\right|_{t=0}
  A(\exp(t\xi)h).
\end{equation}
Then $D_{s_m}=\partial_m$.

\begin{lemma}[Anti-representation sign]\label{lem:anti}
For $\xi,\eta\in\fhat$,
\[
  [D_\xi,D_\eta]=-D_{[\xi,\eta]}.
\]
If $L$ is a Lie monomial of bracket length $r$, and $\partial_L$ is formed
by replacing letters by the corresponding $\partial_m$ and brackets by
operator commutators, then
\begin{equation}\label{eq:anti-sign}
  \partial_L=(-1)^{r-1}D_L.
\end{equation}
\end{lemma}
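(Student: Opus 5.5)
The plan is to work with the left-translation action of $F$ on its coordinate algebra $\mathcal U_\K=\mathcal O(F)$, and to prove both claims by direct computation from the definition \eqref{eq:Dxi}.

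\textbf{First claim.} For $\xi\in\fhat$, let $L_t$ be the operator $(L_tA)(h)=A(\exp(t\xi)h)$. Then $L^\xi_t L^\eta_s A(h) = A(\exp(s\eta)\exp(t\xi)h)$. The order of $\xi$ and $\eta$ is reversed, so $\xi\mapsto D_\xi$ is an anti-homomorphism. To make this precise, I will compute $D_\xi D_\eta A(h)$. Set $B=D_\eta A$, so that $B(h)=\frac{d}{ds}\big|_0A(\exp(s\eta)h)$. Then
\[
D_\xi D_\eta A(h)=\frac{\partial^2}{\partial t\,\partial s}\Big|_{0}A(\exp(s\eta)\exp(t\xi)h).
\]
Antisymmetrizing in $\xi$ and $\eta$ gives the mixed second derivative of
\[
A\bigl(\exp(s\eta)\exp(t\xi)h\bigr)-A\bigl(\exp(t\xi)\exp(s\eta)h\bigr).
\]
Since $\exp(s\eta)\exp(t\xi)=\exp(t\xi)\exp(s\eta)\exp(-st[\xi,\eta]+\dots)$ to second order (BCH), the mixed derivative is $\frac{d}{du}|_0A(\exp(-u[\xi,\eta])h)=-D_{[\xi,\eta]}A(h)$.

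\textbf{Rigor.} To keep this rigorous in the pro-unipotent setting, I will reduce to finite quotients. Any $A$ of weight $\le N$ factors through $F/F_{>N}$, which is a unipotent algebraic group, where these are honest polynomial computations. Equivalently, one can argue purely algebraically: $D_\xi=(\xi\otimes\mathrm{id})\Delta$, where $\xi$ acts as a primitive functional on $\mathcal U_\K$. Coassociativity then gives $D_\xi D_\eta=((\eta\cdot\xi)\otimes\mathrm{id})\Delta$, with the convolution product in the dual. This makes the reversal of order transparent, since the dual of the shuffle Hopf algebra is the completed concatenation algebra and primitive elements there are Lie series. Together with $D_{s_m}=\partial_m$ (which is \eqref{eq:lambda}), this gives the first claim.

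\textbf{Second claim.} I will use induction on the bracket length $r$. For $r=1$ the statement is $D_{s_m}=\partial_m$. For $L=[L_1,L_2]$ with lengths $r_1+r_2=r$, we have
\[
\partial_L=[\partial_{L_1},\partial_{L_2}]=(-1)^{r_1-1}(-1)^{r_2-1}[D_{L_1},D_{L_2}]=(-1)^{r}\cdot(-1)D_{L}=(-1)^{r-1}D_L.
\]

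\textbf{Main obstacle.} The only delicate point is the sign and ordering conventions in the first step, namely confirming that left translation yields an anti-representation rather than a representation. The coalgebra formulation $D_\xi=(\xi\otimes\mathrm{id})\Delta$ makes this check mechanical.
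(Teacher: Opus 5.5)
Your proposal is correct and follows the paper's route. The paper invokes the standard fact that the fields $D_\xi$, which come from left translation and are therefore right-invariant, bracket to the negative of the Lie bracket, and then inducts on bracket length exactly as you do. Your BCH computation, or equivalently $D_\xi D_\eta=((\eta\cdot\xi)\otimes\mathrm{id})\Delta$ via coassociativity, simply proves that fact, and your sign count $(-1)^{r_1-1}(-1)^{r_2-1}\cdot(-1)=(-1)^{r-1}$ is right.
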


\begin{proof}
The vector fields in \eqref{eq:Dxi} are right invariant, whose bracket is
the negative of the Lie bracket.  The second assertion follows by induction.
\end{proof}

\subsection{Specialization to the even associator}

Fix $\Phi_0=\Phi_{\mathrm{ev}}$ to be some even associator as in Lemma \ref{lem:even-associator}.  For brevity, we write
\begin{equation}\label{eq:I-abbreviations}
  \mathcal I:=\mathcal I_{\Phi_{\mathrm{ev}},\rho},
  \qquad \mathcal I^F:=\mathcal I^F_\rho.
\end{equation}
Every point $\rho_F(h)\Ihara\Phi_{\mathrm{ev}}$ belongs to $\Ass(\K)$ and
is therefore group-like, so the functions $\mathcal I$ satisfy the shuffle identities.
Likewise every $\rho_F(h)\in\GRT_1(\K)$ is group-like, so the functions $\mathcal I^F$
satisfy shuffle identities.  By Lemma \ref{lem:no-pure-words} one has
\begin{equation}\label{eq:I-constant-word}
 \mathcal I(a_0;a_1,\ldots,a_N;a_{N+1})
 =\mathcal I^F(a_0;a_1,\ldots,a_N;a_{N+1})=0
\end{equation}
when $N>0$ and $a_1=\cdots=a_N$.  Evenness gives
\begin{equation}\label{eq:odd-constant-term}
  \varepsilon\bigl(\mathcal I(0;a_1,\ldots,a_N;1)\bigr)=0
  \qquad(N\text{ odd}).
\end{equation}
Also
\begin{equation}\label{eq:weight-bound}
 \mathcal I(a_0;a_1,\ldots,a_N;a_{N+1})\in W_{\leq N}\mathcal U_\K.
\end{equation}

For a finite sequence of positive integers, define the function
\begin{equation}\label{eq:Z-def}
 \mathcal Z(n_1,\ldots,n_r)
 :=\mathcal I(0;1,0^{n_1-1},\ldots,1,0^{n_r-1};1)
 \in\mathcal U_\K,
 \qquad \mathcal Z(\varnothing)=1.
\end{equation}
For every $h\in F$,
\begin{equation}\label{eq:Z-scalar-function-relation}
  \mathcal Z(n_1,\ldots,n_r)(h)
  =Z_{\rho_F(h)\Ihara\Phi_{\mathrm{ev}}}(n_1,\ldots,n_r).
\end{equation}
Thus $Z_\Phi(n_1,\ldots,n_r)$ is a scalar attached to one fixed series
$\Phi$, whereas $\mathcal Z(n_1,\ldots,n_r)$ is a regular function on $F$.
For a word $w$ in $\{2,3\}$, define
\[
  \varrho(2)=10,\qquad \varrho(3)=100,
\]
and extend by concatenation.  Then
\begin{equation}\label{eq:Z-word}
  \mathcal Z(w)=\mathcal I(0;\varrho(w);1).
\end{equation}
For $q\geq0$, $\two{q}$ denotes the sequence of $q$ copies of $2$.

\begin{lemma}[Single odd coefficient and even strings]\label{lem:single-zeta}
For every $n\geq1$,
\begin{equation}\label{eq:single-zeta}
  \mathcal Z(2n+1)=f_{2n+1} \in \mathcal U_{\K}.
\end{equation}
For every $q\geq0$, the function $\mathcal Z(\two{q})\in\mathcal U_{\K}$ is constant on $F$.
\end{lemma}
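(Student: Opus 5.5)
Both assertions follow from the kernel lemma (\cref{lem:kernel}), fed by the contiguous cut formula \eqref{eq:contiguous-cut}.

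\emph{The element $\mathcal Z(2n+1)$.} Set $N=2n+1$. By \eqref{eq:Z-def}, $\mathcal Z(2n+1)=\mathcal I(0;1,0^{2n};1)$, which lies in $W_{\leq N}\mathcal U_\K$ by \eqref{eq:weight-bound}. Its counit vanishes by \eqref{eq:odd-constant-term}, since $N$ is odd.

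For odd $m<N$, formula \eqref{eq:contiguous-cut} writes $\partial_m\mathcal Z(2n+1)$ as a sum over $0\le q\le N-m$. The $q$-th term is the scalar $I_{\sigma_m}(a_q;a_{q+1},\ldots,a_{q+m};a_{q+m+1})$ times a function of the shortened word. Here $a_0=0$, $a_1=1$, $a_2=\cdots=a_{N}=0$ and $a_{N+1}=1$. There are two cases.

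\begin{itemize}
\item If $q\geq1$, the interior $a_{q+1},\ldots,a_{q+m}$ consists entirely of zeros. Then $I_{\sigma_m}$ vanishes: its endpoints agree by \eqref{eq:I-equal-endpoints}, or otherwise it is the coefficient of a pure power of $x$, which is zero by \cref{lem:no-pure-words}.
\item If $q=0$, the remaining word is $(0;0^{N-m};1)$, with $N-m>0$ interior zeros. So $\mathcal I$ of this word vanishes by \eqref{eq:I-constant-word}.
\end{itemize}

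Hence $\partial_m\mathcal Z(2n+1)=0$ for every odd $m<N$, and \cref{lem:kernel} gives $\mathcal Z(2n+1)=c f_N$.

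To find $c$, apply $\lambda_N$. Since $\lambda_N=(\lambda_N\otimes\varepsilon)\Delta$, we take the $r=0$ term of \eqref{eq:coaction}, i.e.\ we evaluate the linear part, and get $c=\lambda_N\mathcal I^F(0;1,0^{2n};1)$. Any other term carries a factor $\mathcal I(0;\ldots;1)$ of positive weight paired with a first factor of weight $<N$, so under $\lambda_N\otimes\varepsilon$ it contributes either zero or only via $\varepsilon$ of lower interior coefficients. I would check carefully that these lower-order contributions vanish; the cleanest route is \eqref{eq:contiguous-cut} with $m=N$, which gives $\partial_N\mathcal Z(N)=I_{\sigma_N}(0;1,0^{2n};1)\cdot\mathcal I(0;;1)=I_{\sigma_N}(0;1,0^{2n};1)$. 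By \eqref{eq:leading-I} this equals $[x^{2n}y]\sigma_N=1$, by the normalization in \cref{def:normalized-family}. So $c=1$.

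\emph{The elements $\mathcal Z(\two{q})$.} The word is $\varrho(2^q)=(10)^q$ with endpoints $0$ and $1$, and $N=2q$. For odd $m$, the $q'$-th term of \eqref{eq:contiguous-cut} involves $I_{\sigma_m}(a_{q'};a_{q'+1},\ldots,a_{q'+m};a_{q'+m+1})$, taken on a contiguous stretch of the alternating sequence $0,1,0,1,\ldots,0,1$ of length $m+2$. Since $m+2$ is odd, the two endpoints of this stretch coincide, so the coefficient vanishes by \eqref{eq:I-equal-endpoints}. Thus $\partial_m\mathcal Z(\two{q})=0$ for all odd $m$.

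A function in $\mathcal U_\K$ killed by every $\partial_m$ has no nonempty words in its expansion, by the word-basis argument in the proof of \cref{lem:kernel}. It is therefore a multiple of $1$, i.e.\ constant on $F$.

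The only delicate point is the constant $c$, and the $m=N$ instance of \eqref{eq:contiguous-cut} settles it directly.
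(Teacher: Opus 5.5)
Your proposal is correct and follows the paper's proof essentially step for step. You apply the kernel lemma to $\mathcal Z(2n+1)$ after showing every proper odd cut vanishes, fix the constant by the full-length cut $[x^{2n}y]\sigma_{2n+1}=1$, and use the alternating-endpoint argument to show every $\partial_m$ kills $\mathcal Z(\two{q})$. There are two small differences: you kill the $q=0$ cut through the vanishing remaining function $\mathcal I(0;0^{N-m};1)$ rather than through the equal endpoints of the block $(0;1,0^{m-1};0)$, and your abandoned $(\lambda_N\otimes\varepsilon)\Delta$ detour would in fact work at once, since every term with $r\geq1$ has a first factor of weight $N-r<N$.
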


\begin{proof}
Every proper odd cut of $\mathcal Z(2n+1)=\mathcal I(0;1,0^{2n};1)$ as in \cref{prop:cut-formula} has equal endpoints or
a constant interior word, so it vanishes by
\eqref{eq:I-equal-endpoints} and \eqref{eq:I-constant-word}.  Thus
$\partial_m\mathcal Z(2n+1)=0$ for odd $m<2n+1$.  Its counit is zero by
\eqref{eq:odd-constant-term}. By the kernel lemma (\cref{lem:kernel}) it is hence a scalar multiple
of $f_{2n+1}$.  The full-cut coefficient is
\[
I_{\sigma_{2n+1}}(0;1,0^{2n};1)
 =[x^{2n}y]\sigma_{2n+1}=1,
\]
so \eqref{eq:single-zeta} follows.

The extended sequence $(0;(10)^q;1)$ is alternating.  Every odd cut has
equal endpoints, so every $\partial_m$ annihilates $\mathcal Z(\two{q})$.  Every
nonconstant word in $\mathcal U_\K$ has a first letter detected by some
$\partial_m$, hence $\mathcal Z(\two{q})$ is constant.
\end{proof}

\section{Universal level-one coefficients}\label{sec:coefficients}

Choose any element $\sigma_{2k+1}\in\grt_1(\K)_{2k+1}$ of homogeneous weight $2k+1$ such that $[x^{2k}y]\sigma_{2k+1}=1$, as in Definition~\ref{def:normalized-family}. 
Then it turns out that certain coefficients of $\sigma_{2k+1}$ are universal, i.e., they are independent of the choice of $\sigma_{2k+1}$ with these properties. 

\begin{proposition}\label{prop:universal-coefficients}
Let \(k\geq1\), and let
\(
  \sigma_{2k+1}\in\grt_1(\K)_{2k+1}
\)
satisfy \([x^{2k}y]\sigma_{2k+1}=1\).

For \(a,b\geq0\) with \(a+b=k-1\), put
\begin{equation}\label{eq:cab-def}
  c_{a,b}
  :=I_{\sigma_{2k+1}}\bigl(0;\varrho(\two{a}3\two{b});1\bigr)
  =[(xy)^b x^2y(xy)^a]\sigma_{2k+1}.
\end{equation}
Then
\begin{equation}\label{eq:cab-formula}
  c_{a,b}=2(-1)^k
  \left[
    \binom{2k}{2a+2}
    -\left(1-2^{-2k}\right)\binom{2k}{2b+1}
  \right].
\end{equation}
Moreover, the boundary coefficient
\begin{equation}\label{eq:boundary-coefficient}
  c_k^{\partial}
  :=I_{\sigma_{2k+1}}\bigl(0;0,(10)^k;1\bigr)
  =[(xy)^kx]\sigma_{2k+1}
\end{equation}
satisfies
\begin{equation}\label{eq:boundary-value}
  c_k^{\partial}=2(-1)^k.
\end{equation}
\end{proposition}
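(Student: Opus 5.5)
The plan is to reduce the statement to known facts about an actual associator, using the torsor structure and the cut formula. Fix the even rational associator $\Phi_{\mathrm{ev}}$ and take $\Phi=\exp(t\sigma_{2k+1})\Ihara\Phi_{\mathrm{ev}}$, or more generally any associator. The key input is Terasoma's theorem: the Brown--Zagier identity for $Z_\Phi(\two{a}3\two{b})$ holds for every Drinfeld associator. It expresses $Z_\Phi(\two{a}3\two{b})$ as a fixed rational combination of $Z_\Phi(2k+1)$ and products $Z_\Phi(2j+1)Z_\Phi(\two{k-j})$, with exactly Zagier's binomial coefficients. The boundary coefficient will be handled the same way, using the analogous identity for $Z_\Phi$ of the word $0(10)^k$. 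This identity follows from duality and shuffle relations together with the constancy of $\mathcal Z(\two{q})$ in \cref{lem:single-zeta}.

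Now differentiate. Apply \cref{prop:cut-formula} with $m=2k+1$ to $\mathcal Z(\two{a}3\two{b})=\mathcal I(0;\varrho(\two{a}3\two{b});1)$. This function has weight $2k+1$, so $\partial_{2k+1}$ sees only the full cut ($q=0$, $N=m$). That gives
\[
\partial_{2k+1}\mathcal Z(\two{a}3\two{b})=I_{\sigma_{2k+1}}(0;\varrho(\two{a}3\two{b});1)=c_{a,b}.
\]
Next apply $\partial_{2k+1}$ to the Brown--Zagier identity, viewed as an identity in $\mathcal U_\K$. This step is legitimate because the identity holds pointwise for every $h\in F$, since each $\rho_F(h)\Ihara\Phi_{\mathrm{ev}}$ is an associator, and elements of $\mathcal U_\K$ are determined by their values.

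On the right-hand side, $\mathcal Z(2k+1)=f_{2k+1}$ by \cref{lem:single-zeta}, so $\partial_{2k+1}$ of it equals $1$. In a product term $\mathcal Z(2j+1)\mathcal Z(\two{k-j})=f_{2j+1}\cdot c$ with $j<k$, the $f_{2k+1}$-derivative vanishes. The only exception is the term with $\mathcal Z(\two{0})=1$, which is the same as the $f_{2k+1}$ term. Hence $c_{a,b}$ equals the coefficient of $\zeta(2k+1)$ in Zagier's formula,
\[
2(-1)^k\Bigl[\tbinom{2k}{2a+2}-(1-2^{-2k})\tbinom{2k}{2b+1}\Bigr],
\]
which is \eqref{eq:cab-formula}.

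For \eqref{eq:boundary-value}, apply the same derivative argument to $\mathcal I(0;0,(10)^k;1)$. I expect to rewrite this function, via shuffle relations with the letter $0$ and the vanishing of pure words in \eqref{eq:I-constant-word}, in terms of $\mathcal Z$ of words in $\{2,3\}$ and $\mathcal Z(\two{k})$. The first candidate relation is the shuffle $\mathcal I(0;0;1)\,\mathcal I(0;(10)^k;1)=0$, expanded and combined with the reflection rule \eqref{eq:I-reflection}. An alternative route is to derive the boundary value directly from the $\grt$ relations \eqref{eq:grt-sym} and \eqref{eq:grt-cycle} together with the sum of the $c_{a,b}$. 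The coefficient $2(-1)^k$ should come out as $\sum_{a+b=k-1}$ of the shuffle multiplicities times $c_{a,b}$, simplified by binomial sums such as $\sum_a\binom{2k}{2a+2}=2^{2k-1}-1$ and $\sum_b\binom{2k}{2b+1}=2^{2k-1}$. In that sum the $(1-2^{-2k})$ factor combines to give exactly $2(-1)^k$.

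The main obstacle is this last step: finding the precise linear relation that expresses $(xy)^kx$ in terms of the words $\varrho(\two{a}3\two{b})$ and checking the resulting binomial identity. A secondary point is making sure that Terasoma's form of the Brown--Zagier identity is normalized compatibly with the conventions \eqref{eq:I-01}, in particular the reversed word order and the signs of the even associator.
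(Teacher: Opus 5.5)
Your derivation of \eqref{eq:cab-formula} is the paper's argument: pull Terasoma's identity back along $h\mapsto\rho_F(h)\Ihara\Phi_{\mathrm{ev}}$, and read off the $f_{2k+1}$-coefficient via the full cut, using $\mathcal Z(2r+1)=f_{2r+1}$ and the constancy of $\mathcal Z(\two{q})$. One setup detail is missing. $\rho_F$ and \cref{lem:single-zeta} need a whole normalized family, so you should extend $\sigma_{2k+1}$ by arbitrary normalized $\sigma_m$ in the other odd weights (\cref{lem:ell-nonzero}), as the paper does. Alternatively, your one-parameter family $\exp(t\sigma_{2k+1})\Ihara\Phi_{\mathrm{ev}}$ already suffices. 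It agrees with $\Phi_{\mathrm{ev}}$ below weight $2k+1$, and its weight-$(2k+1)$ part has $t$-derivative $\sigma_{2k+1}$.

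The genuine gap is the boundary value \eqref{eq:boundary-value}, which you leave as an expectation and call the main obstacle. Your ``first candidate'' relation is exactly the right one, and it closes the argument without reflection, duality, the $\grt$ relations, or any further use of Terasoma. Insert the letter $0$ into $(10)^k$. At the front it gives $0(10)^k$. Just after the $(a+1)$-st letter $1$, or just after the $0$ following it, it gives $(10)^a100(10)^b$ with $a+b=k-1$, so each such word occurs exactly twice. Since $\mathcal I(0;0;1)=0$ by \eqref{eq:I-constant-word}, the shuffle identity reads
\[
  0=\mathcal I(0;0,(10)^k;1)+2\sum_{a+b=k-1}\mathcal Z(\two{a}3\two{b}).
\]
Taking the $f_{2k+1}$-coefficient (full cut, as before) gives $c_k^{\partial}=-2\sum_{a+b=k-1}c_{a,b}$. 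Equivalently, the coefficients of the Lie-like $\sigma_{2k+1}$ vanish on the shuffle $x\shuffle(xy)^k$. The binomial sums then give
\[
  \sum_{a+b=k-1}c_{a,b}
  =2(-1)^k\Bigl[(2^{2k-1}-1)-(1-2^{-2k})\,2^{2k-1}\Bigr]
  =(-1)^{k+1},
\]
so $c_k^{\partial}=2(-1)^k$.

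Watch the sign: the sum of the $c_{a,b}$ is $(-1)^{k+1}$, not $2(-1)^k$. It is the factor $-2$, from the multiplicity and from moving the shuffle terms across, that produces $2(-1)^k$. Your worry about the reversed word order is also moot here, since reversal commutes with shuffles and all words involved have endpoints $0$ and $1$.
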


The proof uses associators, although the statement of the proposition does not.
The required associator identity is recalled next.

\subsection{Terasoma's universal Brown--Zagier identity and proof of Proposition~\ref{prop:universal-coefficients}}

We use the notation \(Z_\Phi(n_1,\ldots,n_r)\in\K\) from
\eqref{eq:Z-Psi-def}.  Thus its argument is a finite sequence of positive
integers.  In particular, \(Z_\Phi(2r+1)\) has the one-term argument
\((2r+1)\), whereas \(\two{q}\) means a sequence of \(q\) copies of \(2\).
We use the convention \(\binom{m}{j}=0\) for \(j>m\).

\begin{theorem}[Terasoma]\label{thm:terasoma}
For every associator \(\Phi\in\Ass(\K)\) and all \(a,b\geq0\),
\begin{align}
  Z_\Phi(\two{a},3,\two{b})
  ={}&2\sum_{r=1}^{a+b+1}(-1)^r
  \left[
    \binom{2r}{2a+2}
    -\left(1-2^{-2r}\right)\binom{2r}{2b+1}
  \right] \notag\\
  &\hspace{25mm}\cdot
  Z_\Phi(2r+1)Z_\Phi(\two{a+b+1-r}).
  \label{eq:terasoma}
\end{align}
\end{theorem}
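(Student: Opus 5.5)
Since the statement is quoted from \cite{Terasoma}, the plan is to reconstruct Terasoma's argument rather than to find a new one. The guiding idea is that both sides of \eqref{eq:terasoma} depend on $\Phi$ only through its image in a suitable ``small'' quotient of $\K\llangle x,y\rrangle$. In that quotient an associator should be completely determined by its Gamma-function data $\exp\bigl(\sum_{n\ge2}\tfrac{(-1)^n}{n}Z_\Phi(n)(\cdots)\bigr)$. Once this is established, \eqref{eq:terasoma} becomes an identity between explicit generating functions, and that identity is exactly the one Zagier verified for $\Phi_{\mathrm{KZ}}$ \cite{Zagier}.

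First I form the two-variable generating series
\[
F_\Phi(X,Y)=\sum_{a,b\ge0}Z_\Phi(\two{a},3,\two{b})\,X^{2a+2}Y^{2b+1}.
\]
The words $\varrho(\two{a}3\two{b})$ have $y$'s separated by one or two $x$'s, with exactly one gap of length two. I would look for a representation of $\K\llangle x,y\rrangle$ on a small module, of the kind coming from Gauss hypergeometric functions: $x$ and $y$ act by $2\times2$ (or rank-two over a power series ring) matrices depending on formal parameters. The requirements are that $F_\Phi$, together with the series $\sum_q Z_\Phi(\two{q})T^{2q}$, appear as matrix entries of the image of $\Phi$.

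Second, I show that the image of any group-like $\Phi$ satisfying \eqref{eq:ass-sym}, \eqref{eq:ass-hex} and \eqref{eq:ass-pent} in this representation is determined by the single-variable values $Z_\Phi(n)$. Concretely, I expect it to be a product of Gamma-type factors $\Gamma_\Phi(s)=\exp\bigl(\sum_n \tfrac{(-1)^n}{n}Z_\Phi(n)s^n\bigr)$, in the spirit of Drinfeld's and Enriquez's computation of the Gamma function of an associator. The hexagon and pentagon relations will be used here to force the hypergeometric (Gauss summation) shape. The odd part is then carried by the $Z_\Phi(2r+1)$ and the even part by the even-weight generating series. The identity $Z_\Phi(\two{q})\in\Q\cdot Z_\Phi(2)^q$, with the same constant as for $\Phi_{\mathrm{KZ}}$, also comes out of this step via the reflection/hexagon relation.

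Third, I extract the coefficient of $X^{2a+2}Y^{2b+1}$ from the resulting product formula. The expression is linear in the odd values $Z_\Phi(2r+1)$ modulo products of at least two of them. By weight, only terms of the form $Z_\Phi(2r+1)\cdot(\text{even part})$ can occur. The binomials $\binom{2r}{2a+2}$ and $(1-2^{-2r})\binom{2r}{2b+1}$ should arise from expanding $(X+Y)^{2r}$-type and $(X+Y)^{2r}-(\text{halved argument})$-type terms. This is literally Zagier's computation with $\zeta$ replaced by $Z_\Phi$, so it can be copied.

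The main obstacle is the second step: producing a quotient or representation in which the two associator equations alone pin down the image of $\Phi$ in terms of $Z_\Phi(n)$. Zagier's proof uses analysis (hypergeometric evaluations), which is not available for an abstract $\Phi$. My first attempt would be to transport the statement along the torsor. Since $\GRT_1$ acts transitively on $\Ass$, I would check that the difference of the two sides of \eqref{eq:terasoma}, viewed as a regular function on $\Ass$, is invariant under the infinitesimal action, using the cut formula (\cref{prop:cut-formula}) and \cref{lem:kernel}. The identity would then follow from its validity at $\Phi_{\mathrm{KZ}}$ after base change to $\C$. The weak point of this route is that the invariance check itself needs control of the coefficients of generators $\sigma_{2k+1}$, so it may become circular with \cref{prop:universal-coefficients}. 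That circularity is why I would prefer the hypergeometric-quotient argument of \cite{Terasoma}.
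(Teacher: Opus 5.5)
The paper gives no argument of its own for this statement. It cites \cite[Theorem~1.1]{Terasoma} and only adds that, since the identity holds universally with rational coefficients, it is valid over every field of characteristic zero. You instead try to reconstruct a proof, and there is a genuine gap; in fact Step~1 cannot work as stated. No representation on a free rank-two module over a commutative ring can have \(F_\Phi\) among the matrix entries of the image of \(\Phi\):
\begin{itemize}
\item Since \(\log\Phi\) is a Lie series without linear term (\cref{lem:no-pure-words}), \(\Phi(X,Y)=\Phi(X-\lambda,Y-\mu)\) for central \(\lambda,\mu\).
\item Taking \(\lambda,\mu\) to be eigenvalues (adjoining them if necessary) makes \(X'=X-\lambda\) and \(Y'=Y-\mu\) singular.
\item Singular \(2\times2\) matrices satisfy \(M^2=\operatorname{tr}(M)M\) and \(MNM=\operatorname{tr}(MN)M\). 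Hence the image of a word depends only on its numbers of \(x\)'s, \(y\)'s and blocks, and on its first and last letters.
\item All words \((xy)^bx^2y(xy)^a\) with \(a+b=k-1\) share these data. So a rank-two representation only detects \(\sum_{a+b=k-1}Z_\Phi(\two{a},3,\two{b})\), never the individual values in \eqref{eq:terasoma}.
\end{itemize}
You would need rank at least three, and there Step~2 is only an expectation. Showing that the pentagon and hexagon alone pin down the image of \(\Phi\) in terms of the \(Z_\Phi(n)\) is exactly the nontrivial content of Terasoma's theorem. Zagier's proof is analytic (it starts from the nested-sum definition of multiple zeta values) and has no counterpart for an abstract \(\Phi\). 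Step~3 is meaningful only after such a closed formula exists.

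Your suspicion about the torsor route is justified. Pull the difference of the two sides back along \(h\mapsto\rho_F(h)\Ihara\Phi_0\). Its \(f_{2k+1}\)-coefficient is \(c_{a,b}\) minus the right-hand side of \eqref{eq:cab-formula}. So transporting the identity from one associator to all others requires knowing \cref{prop:universal-coefficients} in advance, whereas the paper deduces that proposition from \eqref{eq:terasoma}. Zagier's theorem at the (rescaled) KZ associator only fixes the constant term. As written, your proposal therefore amounts to citing \cite{Terasoma}, which is all the paper does. Present it that way, together with the base-change remark, rather than as a proof via Steps~1--3.
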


\begin{proof}[Reference]
This is the Brown--Zagier relation for associators
\cite[Theorem~1.1]{Terasoma}.  Terasoma proves the formula universally for
associators, with rational coefficients.  It therefore remains valid over
every field of characteristic zero.
\end{proof}

\begin{proof}[Proof of \cref{prop:universal-coefficients}]
Extend the prescribed element $\sigma_{2k+1}$ to a normalized
family by choosing, for every odd $m\neq 2k+1$, a normalized
element $\sigma_m\in\mathfrak{grt}_1(\K)_m$.
Construct $\rho_F$ and the coefficient functions using this
family. (The argument below will still depend only on the prescribed element $\sigma_{2k+1}$, so the auxiliary
choices in the other weights play no role.)

Fix \(a,b\geq0\) such that \(k=a+b+1\).
Pull \eqref{eq:terasoma} back along the orbit map
\[
  F\longrightarrow\Ass(\K),
  \qquad h\longmapsto\rho_F(h)\Ihara\Phi_{\mathrm{ev}}.
\]
This gives an identity in the coordinate algebra \(\mathcal U_\K\).

We compare the coefficient of the one-letter word \(f_{2k+1}\).  On the left,
the cut formula \eqref{eq:contiguous-cut}, with a cut of the full length
\(2k+1\), gives
\[
  [f_{2k+1}]\mathcal Z(\two{a},3,\two{b})
  =I_{\sigma_{2k+1}}\bigl(0;\varrho(\two{a}3\two{b});1\bigr)
  =c_{a,b}.
\]
On the right, \cref{lem:single-zeta} gives
\(\mathcal Z(2r+1)=f_{2r+1}\), while every \(\mathcal Z(\two{q})\) is constant on \(F\).
Therefore no summand with \(r<k\) contains \(f_{2k+1}\).  The summand with
\(r=k\) has \(\mathcal Z(\two{0})=1\), and its \(f_{2k+1}\)-coefficient is
\[
  2(-1)^k
  \left[
    \binom{2k}{2a+2}
    -\left(1-2^{-2k}\right)\binom{2k}{2b+1}
  \right].
\]
This proves \eqref{eq:cab-formula}.

It remains to compute \(c_k^{\partial}\).  By \cref{lem:no-pure-words} we have
\[
  \mathcal I^F(0;0;1)=0.
\]
Take the shuffle product with \(\mathcal I^F(0;(10)^k;1)\).  Inserting the additional zero at the beginning gives
\(\mathcal I^F(0;0,(10)^k;1)\).  Every other resulting word has one doubled zero,
and each such word occurs twice.  Hence
\begin{equation}\label{eq:boundary-shuffle}
  0=\mathcal I^F(0;0,(10)^k;1)
    +2\sum_{a+b=k-1}\mathcal I^F(0;(10)^a100(10)^b;1).
\end{equation}
Taking the coefficient of \(f_{2k+1}\) gives
\begin{equation}\label{eq:boundary-sum}
  c_k^{\partial}=-2\sum_{a+b=k-1}c_{a,b}.
\end{equation}
Using \eqref{eq:cab-formula} and the elementary binomial sums
\[
  \sum_{j=1}^{k}\binom{2k}{2j}=2^{2k-1}-1,
  \qquad
  \sum_{j=0}^{k-1}\binom{2k}{2j+1}=2^{2k-1},
\]
one obtains
\(
  \sum_{a+b=k-1}c_{a,b}=(-1)^{k+1}
\).
Equation \eqref{eq:boundary-sum} now gives
\(c_k^{\partial}=2(-1)^k\).
\end{proof}

\subsection{The \texorpdfstring{$2$}{2}-adic size of the coefficients}

For a nonzero rational number \(q\), let the valuation \(v_2(q)\) be the exponent of \(2\)
in its prime factorization, and put \(v_2(0)=+\infty\).
For every odd integer $m=2k+1\geq3$, define its \emph{baseline valuation} by
\begin{equation}\label{eq:baseline-valuation}
  \nu(m)
  :=v_2(c_{0,k-1})
  =2-2k+v_2(k).
\end{equation}
Notice that $\nu(m)\leq0$ for every odd $m\geq3$.

\begin{proposition}\label{prop:2adic}
Let $k\geq1$, let $a,b\geq0$ satisfy $a+b=k-1$, and put
$m=2k+1$. Then:
\begin{enumerate}[label=(\roman*)]
\item $c_{a,b}-c_{b,a}\in2\mathbb Z$ so that $v_2(c_{a,b}-c_{b,a})\geq1$;
\item
$\nu(m)\leq v_2(c_{a,b})\leq0$;
\item $v_2(c_k^{\partial})=1$.
\end{enumerate}
Moreover,
\begin{equation}\label{eq:c-valuation}
  v_2(c_{a,b})
  =
  \nu(m)+v_2\binom{2k-1}{2b}.
\end{equation}
In particular, equality in the lower bound in {\rm(ii)} holds for
$b=0$ and $b=k-1$.
\end{proposition}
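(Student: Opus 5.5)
The plan is to split the explicit formula \eqref{eq:cab-formula} from \cref{prop:universal-coefficients} into an even integer part and a dyadic part, and to show that the dyadic part always dominates $2$-adically. Concretely, I would write
\[
  c_{a,b}=\underbrace{2(-1)^k\left[\binom{2k}{2a+2}-\binom{2k}{2b+1}\right]}_{=:E_{a,b}\in2\mathbb Z}
  \;+\;\underbrace{(-1)^k2^{1-2k}\binom{2k}{2b+1}}_{=:T_{b}}.
\]
Since $E_{a,b}$ is twice an integer, $v_2(E_{a,b})\geq1$.

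\emph{Part (i).} Using $a+b=k-1$ we have $2k-(2b+1)=2a+1$. The symmetry of binomial coefficients then gives $\binom{2k}{2b+1}=\binom{2k}{2a+1}$, so $T_b=T_a$. Hence $c_{a,b}-c_{b,a}=E_{a,b}-E_{b,a}\in2\mathbb Z$, which is (i).

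\emph{Valuation of $T_b$.} I would use the identity $\binom{2k}{2b+1}=\frac{2k}{2b+1}\binom{2k-1}{2b}$. Since $2b+1$ is odd, this yields
\[
  v_2(T_b)=1-2k+1+v_2(k)+v_2\binom{2k-1}{2b}=\nu(m)+v_2\binom{2k-1}{2b}.
\]
The key step is to show that $v_2(T_b)\leq0$. For this I would use Kummer's theorem: $v_2\binom{2k}{j}$ equals the number of carries when adding $j$ and $2k-j$ in base $2$. That number is at most the number of binary digits of $2k$ minus one, i.e. $\lfloor\log_2(2k)\rfloor$. It then remains to check the elementary inequality $1-2k+\lfloor\log_2(2k)\rfloor\leq0$ for all $k\geq1$; equality holds at $k=1$, and the left side decreases thereafter. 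I expect this to be the main, though mild, obstacle, since it is the only step that is not a formal manipulation.

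\emph{Conclusion.} Given $v_2(T_b)\leq0<1\leq v_2(E_{a,b})$, the ultrametric inequality gives $v_2(c_{a,b})=v_2(T_b)$, which is exactly \eqref{eq:c-valuation}. Since binomial valuations are nonnegative, we get the lower bound $\nu(m)\leq v_2(c_{a,b})$, and the upper bound $v_2(c_{a,b})\leq0$ comes from the Kummer estimate, proving (ii). For the equality cases, $b=0$ gives $\binom{2k-1}{0}=1$ and $b=k-1$ gives $\binom{2k-1}{2k-2}=2k-1$, which is odd. The case $b=k-1$ (that is, $a=0$) is also consistent with the definition \eqref{eq:baseline-valuation} of $\nu(m)$. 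Finally, (iii) is immediate from \eqref{eq:boundary-value}, since $v_2(2(-1)^k)=1$.
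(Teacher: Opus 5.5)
Your proof is correct and follows essentially the same route as the paper. You split off the even integer, compute the valuation of $(-1)^k2^{1-2k}\binom{2k}{2b+1}$ via $\binom{2k}{2b+1}=\frac{2k}{2b+1}\binom{2k-1}{2b}$, and conclude with the ultrametric inequality. The only difference is that the paper obtains $v_2\binom{2k}{2b+1}\leq 2k-1$ directly from $0<\binom{2k}{2b+1}<2^{2k}$, so your Kummer estimate is valid but not needed.
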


\begin{proof}
We compute
\[
  c_{a,b}-c_{b,a}
  =2(-1)^k
  \left(\binom{2k}{2a+2}-\binom{2k}{2b+2}\right),
\]
which proves assertion (i).

Rewrite \eqref{eq:cab-formula} as an even integer plus
\[
  (-1)^k2^{1-2k}\binom{2k}{2b+1}.
\]
The latter number has \(2\)-adic valuation at most zero, so adding an even
integer does not change its valuation.  Thus
\begin{align*}
  v_2(c_{a,b})
  &=1-2k+v_2\binom{2k}{2b+1}\\
  &=2-2k+v_2(k)+v_2\binom{2k-1}{2b},
\end{align*}
which is \eqref{eq:c-valuation}.  The last binomial coefficient is an integer,
so the lower bound in (ii) follows.  The first line gives the upper bound because
\(
  0<\binom{2k}{2b+1}<2^{2k}
\).
For \(b=k-1\) and for $b=0$, the binomial coefficient
\(
  \binom{2k-1}{2k-2}=2k-1 \quad \text{and}\quad \binom{2k-1}{0}=1
\)
are odd, proving \(v_2(c_{0,k-1})=v_2(c_{k-1,0})=\nu(2k+1)\).  Finally,
\eqref{eq:boundary-value} gives (iii).
\end{proof}


\section{Level and deconcatenation}\label{sec:level}

Let \(w\) be a word in the two letters \(2\) and \(3\).  Its weight and level
are
\[
  \wt(w)=2\,\#\{\text{letters }2\text{ in }w\}
         +3\,\#\{\text{letters }3\text{ in }w\},
  \qquad
  \lev(w)=\#\{\text{letters }3\text{ in }w\}.
\]
Under the binary encoding \(\varrho\), every letter \(3\) contributes one
occurrence of \(00\), and no other occurrence of \(00\) appears.  Thus
\(\lev(w)\) is also the number of occurrences of \(00\) in \(\varrho(w)\).
For \(p\geq0\), let
\begin{equation}\label{eq:level-filtration}
  \UZ_{\K,\leq p}
  :=\operatorname{span}_{\K}\{\mathcal Z(w):\lev(w)\leq p\}
  \subset\mathcal U_\K,
\end{equation}
and put \(\UZ_{\K,\leq p}=0\) for \(p<0\).

\begin{lemma}[A cut lowers the level]\label{lem:level-drop}
Every nonzero summand in the cut formula \eqref{eq:contiguous-cut} for \(\partial_m \mathcal Z(w)\) is a scalar
multiple of \(\mathcal Z(u)\) for another word \(u\) in \(2\) and \(3\), and
\[
  \lev(u)\leq\lev(w)-1.
\]
Consequently
\[
  \partial_m\UZ_{\K,\leq p}\subseteq\UZ_{\K,\leq p-1}
\]
for every odd \(m\geq3\).
\end{lemma}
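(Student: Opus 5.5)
We apply the cut formula \eqref{eq:contiguous-cut} to $\mathcal Z(w)=\mathcal I(0;\varrho(w);1)$ and analyse the surviving summands combinatorially. Write the extended sequence as $a_0,a_1,\ldots,a_N,a_{N+1}$ with $a_0=0$, $a_{N+1}=1$ and $(a_1,\ldots,a_N)=\varrho(w)$. Then $a_0,\ldots,a_N$ has the form $0$ followed by blocks $10$ or $100$, and it is followed by the final $1$. A summand is indexed by a window $(a_q;a_{q+1},\ldots,a_{q+m};a_{q+m+1})$ of odd length $m$. Its scalar factor $I_{\sigma_m}(\ldots)$ vanishes by \eqref{eq:I-equal-endpoints} unless the two endpoints differ. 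So there are two cases: $a_q=0$ and $a_{q+m+1}=1$, or $a_q=1$ and $a_{q+m+1}=0$.

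The next step is to identify the remaining word and show it again has the form $\varrho(u)$. The remaining interior word is $a_1\cdots a_q\,a_{q+m+1}\cdots a_N$, with outer endpoints $0$ and $1$.

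In the first case, $a_q=0$ and $a_{q+m+1}=1$ (or $q+m+1=N+1$). Then $a_1\cdots a_q$ is a prefix ending in $0$, hence a complete concatenation of blocks $10$, $100$; the exception is $q=0$, where it is empty. The tail $a_{q+m+1}\cdots a_N$ starts with $1$, so it is also a concatenation of blocks or empty. The remaining word is therefore $\varrho(u)$, where $u$ is the concatenation of the prefix and suffix words of $w$.

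In the second case, $a_q=1$ and $a_{q+m+1}=0$. The prefix $a_1\cdots a_q$ ends in $1$ and the suffix starts with $0$. Gluing gives $\ldots 1\,0\ldots$, where the $1$ begins a block that is completed by the zeros at the start of the suffix. The suffix starts with one or two zeros before the next $1$ or the end. So the glued word is again a concatenation of blocks $10$ or $100$, hence of the form $\varrho(u)$. One has to check that at most two zeros follow that $1$. The zeros at the start of the suffix are at most the trailing zeros of the block originally containing $a_{q+m+1}$, so there are at most two.

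The main point is the level count. Since $m$ is odd, the removed window has odd length. Consider $\wt(u)=\wt(w)-m$, which equals the length of $\varrho(u)$ and hence $2(\#2)+3(\#3)$. Because $m$ is odd, the parity of $\lev$ changes, so $\lev(u)\neq\lev(w)$.

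It therefore remains to show $\lev(u)\le\lev(w)$. The level equals the number of occurrences of $00$, so we need that deletion plus gluing creates no new $00$ beyond those already present. In the first case the junction is $0|1$, so no new $00$ arises. In the second case the junction is $1|0\ldots$, and any $00$ in the glued word lies inside the suffix, where it was already present in $w$. Removing letters only destroys occurrences of $00$, so $\lev(u)\le\lev(w)$, and with the parity change we get $\lev(u)\le\lev(w)-1$.

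Finally, summands whose window has a constant interior vanish anyway by \eqref{eq:I-constant-word}; this is harmless and only kills more terms. Applying this to each $\mathcal Z(w)$ spanning $\UZ_{\K,\leq p}$, together with the linearity of $\partial_m$, gives $\partial_m\UZ_{\K,\leq p}\subseteq\UZ_{\K,\leq p-1}$.

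The step I expect to need the most care is the second gluing case. There one has to rule out a junction producing $000$ or leaving a trailing $1$ at the end of the word. A trailing $1$ cannot occur because $a_{q+m+1}=0$ belongs to the suffix.
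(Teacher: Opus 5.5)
Your proof is correct. The part showing that the remaining word is again some $\varrho(u)$ matches the paper's argument: the new adjacency is $01$ or $10$, so no $11$ and no run $000$ is created. You just spell out the case analysis in more detail.

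There is one small imprecision in your first case. If $a_q$ is the middle zero of a block $100$, then $a_1\cdots a_q$ is $\varrho$ of a word ending in $2$, where $w$ had a $3$. So $u$ is not literally a concatenation of a prefix and a suffix of $w$, though this does not affect your conclusion.

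Where you genuinely differ from the paper is the strict drop in level.
\begin{itemize}
\item The paper argues locally. A cut block with distinct endpoints and an odd number of interior symbols cannot be alternating, so it contains an occurrence of $00$. The deletion destroys it, while the junction creates none.
\item You argue globally. From $\wt=2\,\#\{2\}+3\,\#\{3\}$ you get $\lev(u)\equiv\wt(u)\pmod 2$. Together with $\wt(u)=\wt(w)-m$ for odd $m$, this forces the level to change parity. Being non-increasing, it must therefore drop.
\end{itemize}

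Your route never inspects the structure of the cut block, and it even shows that the drop is odd. The paper's route gives finer information: the drop equals the number of occurrences of $00$ in the extended cut block. That is exactly the bookkeeping reused in the proof of Proposition~\ref{prop:deconcat}, where blocks without $00$ vanish and blocks with two or more occurrences are discarded modulo lower level.
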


\begin{proof}
Consider the extended binary word
\[
  0\,\varrho(w)\,1.
\]
A nonzero cut has different endpoints, since a cut with equal endpoints has
coefficient zero by \eqref{eq:I-equal-endpoints}.  After the interior of the
cut is deleted, these two endpoints become adjacent.  The new adjacent pair
is therefore \(01\) or \(10\).  No block \(11\), and no run of three zeros,
is created.  The remaining interior word consequently decomposes uniquely
into the blocks \(10\) and \(100\), so it is \(\varrho(u)\) for a word \(u\)
in \(2\) and \(3\).

If the extended cut block contained no occurrence of \(00\), it would be
alternating.  It has an odd number of interior symbols, and hence an odd total
number of symbols including its endpoints.  Its two endpoints would then be
equal, contrary to nonvanishing.  Thus a nonzero cut removes at least one
occurrence of \(00\).  Since the two endpoints are distinct, it creates
no new occurrence of \(00\).
\end{proof}

For a level-one word
\(
  v=\two{a}3\two{b}
\)
put
\begin{equation}\label{eq:cv}
  c(v):=c_{a,b}.
\end{equation}

\begin{proposition}[The top-level part of a cut]\label{prop:deconcat}
Let \(w\) have level \(p\geq1\), and let \(r\geq1\).  Modulo
\(\UZ_{\K,\leq p-2}\),
\begin{equation}\label{eq:deconcat}
  \partial_{2r+1} \mathcal Z(w)
  =\sum_{\substack{w=uv\\
                   \lev(v)=1,\ \wt(v)=2r+1}}
       c(v) \mathcal Z(u)+E_{r,w}.
\end{equation}
Here \(E_{r,w}\in\UZ_{\K,\leq p-1}\) can be written as a linear
combination of level-\((p-1)\) functions \(\mathcal Z(u)\) whose scalar coefficients
are integral linear combinations of
\begin{equation}\label{eq:error-generators}
  c_{a,b}-c_{b,a}\quad(a+b=r-1),
  \qquad c_r^{\partial}.
\end{equation}
In particular, every scalar coefficient occurring in \(E_{r,w}\) has
\(2\)-adic valuation at least one.
\end{proposition}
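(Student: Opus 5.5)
We would start from the cut formula \eqref{eq:contiguous-cut} applied to $\mathcal Z(w)=\mathcal I(0;\varrho(w);1)$ with $m=2r+1$. We then sort the nonzero cuts by how many occurrences of $00$ they remove. By \cref{lem:level-drop}, every nonzero cut removes at least one $00$, and the remaining word is $\varrho(u)$ with $\lev(u)\le p-1$. Cuts removing two or more occurrences therefore land in $\UZ_{\K,\leq p-2}$ and may be discarded. So we only need to analyse cuts of the extended word $0\,\varrho(w)\,1$ that contain exactly one $00$ together with distinct endpoints.

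Next we classify such cuts by their position relative to the block decomposition of $\varrho(w)$ into $10$'s and $100$'s. Suppose a cut removes exactly one $00$, coming from a letter $3$, and has odd interior length $2r+1$ with distinct endpoints. Then the extended block, with endpoints included, has the form of an alternating string with a single doubled zero. There are two kinds.

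\emph{Aligned cuts.} The cut interior is exactly $\varrho(v)$ for a suffix-aligned factor, with left endpoint the last letter before $v$ (which is $0$ or the initial $0$) and right endpoint the first letter $1$ of the next block (or the final $1$). Here $v=\two{a}3\two{b}$ with $\wt(v)=2r+1$. Its coefficient is $I_{\sigma_{2r+1}}(0;\varrho(v);1)=c_{a,b}$, and the remaining word is $\varrho$ of $w$ with $v$ deleted.

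\emph{Shifted cuts.} The block is shifted by one letter, with endpoints $1\ldots0$. Using the reflection rule \eqref{eq:I-reflection}, its coefficient becomes $I_{\sigma}(0;\text{reversed};1)$ up to the sign $(-1)^{2r+1}=-1$. The reversed interior has the form $\varrho(\two{b}3\two{a})$ or an interior starting with $0$. This is where $c_{b,a}$ and the boundary coefficient $c_r^\partial=I_\sigma(0;0,(10)^r;1)$ enter.

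The main step, and the expected obstacle, is the bookkeeping. Fix a factorization $w=u'vu''$ with $v$ the unique level-one factor of weight $2r+1$ around a given $3$. The aligned cut contributes $c(v)\mathcal Z(u'u'')$, and the shifted cuts around the same $3$ contribute $-c_{b,a}$ or $-c_{a,b}$ times a function $\mathcal Z$ of a level-$(p-1)$ word. The sign of a shifted cut is fixed by \eqref{eq:I-reflection}, and its coefficient is $-c(\text{reversed})$ or $\pm c_r^\partial$ when the block touches an endpoint.

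We then group, for each resulting word $u$, all the cuts producing it. When $v$ is a suffix of $w$ ($u''$ empty), we keep the aligned term $c(v)\mathcal Z(u)$ as the main term of \eqref{eq:deconcat}. In every other case the aligned term and a shifted term produce the same $u$, so their coefficients combine into $c_{a,b}-c_{b,a}$. The exception is a shifted block hitting the final $1$ or a $100$ boundary, which instead yields $c_r^\partial$. We verify this pairing by a careful case check on a window of the form $\ldots10\,|\,\varrho(v)\,|\,10\ldots$, writing out which letter is dropped at each end.

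All these leftover coefficients are integral combinations of the quantities \eqref{eq:error-generators}, and they multiply level-$(p-1)$ functions, so they form $E_{r,w}$. Finally, \cref{prop:2adic}(i) and (iii) show that these quantities have $v_2\ge1$. Hence every coefficient of $E_{r,w}$ does too, which gives the last claim.
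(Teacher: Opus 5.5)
Your outline follows the paper's proof: discard blocks meeting two or more $00$'s, pair each aligned cut not ending at the final endpoint with its one-step right shift, keep the suffix cut as the main term, and send the boundary-type cuts to $E_{r,w}$. However, the case analysis, which is the whole content of the proof, is incomplete as written.

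\textbf{Missing shape.} You claim that a nonzero cut with exactly one $00$ is either aligned (interior $\varrho(v)$) or a one-step shift with endpoints $1,0$. This is false. With endpoints $0,1$ the interior can also be $0(10)^r$, where the $00$ is formed by the left endpoint (the middle zero of a block $100$) and the first interior letter. For example, take $w=3\,2$ and $r=1$. The extended word is $0\,100\,10\,1$, and the three cuts are $(0;1,0,0;1)$, $(1;0,0,1;0)$ and $(0;0,1,0;1)$, with coefficients $c_{0,0}$, $-c_{0,0}$ and $c_1^{\partial}$. All three leave $\mathcal Z(2)$, and the third is of neither of your two kinds. The extended block contains no $11$ and exactly one $00$, so the exhaustive list is: interiors $(10)^a100(10)^b$ and $0(10)^r$ for endpoints $0,1$, and their reversals for endpoints $1,0$. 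The two $c_r^{\partial}$-shapes are not partners of anything. They go directly into $E_{r,w}$ with coefficient $\pm c_r^{\partial}$, which is why the omission happens to be harmless for the statement.

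\textbf{Pairing must be a bijection.} You only argue one direction. Shifting a non-final aligned cut $0[(10)^a100(10)^b]1$ one place to the right gives $1[(01)^a001(01)^b]0$. This leaves the same binary word and has coefficient $-c_{b,a}$ by \eqref{eq:I-reflection}. Conversely, every cut with endpoints $1,0$ and interior $(01)^a001(01)^b$ arises in this way from exactly one aligned cut, by shifting left: the letter before a $1$ is always $0$, and the last interior letter is $1$. Without this converse, an unpaired reflected cut would leave a lone $-c_{b,a}$ of valuation $\le 0$, and the final valuation claim would fail.

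For the same reason, your sentence that a shifted block ``hitting the final $1$ or a $100$ boundary instead yields $c_r^{\partial}$'' is misleading. No non-final aligned cut ever loses its $-c_{b,a}$ partner; if one did, the resulting coefficient $c_{a,b}\pm c_r^{\partial}$ would not be of the allowed form.

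With the four-shape list and the explicit bijection, your argument becomes the paper's proof. Note also that around a given $3$ there are in general several level-one factors $v$ of weight $2r+1$, not a unique one.
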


\begin{proof}
Write
\[
  (a_0,a_1,\ldots,a_N,a_{N+1})=(0,\varrho(w),1).
\]
A summand of the cut formula for \(\partial_{2r+1}\) is determined by an
extended block
\begin{equation}\label{eq:cut-block}
  B_q=(a_q;a_{q+1},\ldots,a_{q+2r+1};a_{q+2r+2}).
\end{equation}
Its scalar coefficient is \(I_{\sigma_{2r+1}}(B_q)\in\K\), and deleting its
interior gives the remaining coefficient function in \(\mathcal U_\K\).
We classify these blocks by the number of occurrences of \(00\), counting
also the pairs that contain an endpoint of the block.

If there is no occurrence of \(00\), the block is alternating.  Its endpoints
are equal, so its coefficient is zero.  If there are at least two occurrences
of \(00\), the deletion lowers the level by at least two.  The corresponding
term belongs to \(\UZ_{\K,\leq p-2}\) and can be discarded.

It remains to consider blocks with exactly one occurrence of \(00\) and with
different endpoints.  Because the full binary word contains no \(11\), the
possibilities with endpoints \(0,1\) are exactly
\begin{equation}\label{eq:one-double-zero-shapes}
  (10)^a100(10)^b\quad(a+b=r-1),
  \qquad
  0(10)^r,
\end{equation}
for the interior of the block.  The possibilities with endpoints \(1,0\)
are their reversals.

The first word in \eqref{eq:one-double-zero-shapes} has coefficient
\(c_{a,b}\).  Unless the block ends at the final endpoint of the full word,
it has a neighboring cut obtained by shifting the block one place to the
right:
\[
0[(10)^a100(10)^b]10 = 01[(01)^a001(01)^b]0.
\]
The two cuts leave exactly the same remaining binary word.  The
shifted cut has coefficient \(-c_{b,a}\): its endpoints are reversed, the
interior word is the reversal of the first shape with \(a\) and \(b\)
interchanged, and its length is odd.  The two contributions therefore combine
to
\[
  (c_{a,b}-c_{b,a})\mathcal Z(u)
\]
for the same remaining word \(u\).  Conversely, every cut of the reflected
first shape occurs in exactly one such pair.
The only first-shape cut without a neighboring partner is the cut ending at
the final endpoint.  Its interior is \(\varrho(v)\) for a level-one suffix
\(v\) of \(w\).  If \(w=uv\), this cut contributes \(c(v)\mathcal Z(u)\).  

The second word in \eqref{eq:one-double-zero-shapes}, and its reversal, has
coefficient \(\pm c_r^{\partial}\).  These terms also belong to the error
term \(E_{r,w}\). Summing all
surviving cuts gives \eqref{eq:deconcat}.  The final valuation statement
follows from \cref{prop:2adic}.
\end{proof}

Thus, on the highest remaining level, \(\partial_{2r+1}\) acts like removal
of a level-one suffix of weight \(2r+1\).  All other contributions have
strictly larger \(2\)-adic valuation.  This is the form of Brown's
``deconcatenation modulo \(I\)'' argument that will be used below
\cite[Section~6]{BrownMTZ}.

\section{Lyndon words and the pairing matrix}\label{sec:lyndon}

\subsection{The two alphabets}

Let
\[
  X=\{3,5,7,\ldots\},\qquad 3<5<7<\cdots,
\]
and identify the letter \(m\in X\) with the source generator \(s_m\).
Let \(Y=\{3,2\}\), ordered by \(3<2\) (sic!).  For an alphabet \(A\), let \(A^*\) denote the set of finite words in
\(A\), including the empty word.  Words are ordered lexicographically, with
a proper prefix smaller than the longer word.  The weight of a word is the
sum of its letters.  Define a map on words
\begin{equation}\label{eq:phi}
  \phi:X^*\longrightarrow Y^*,
  \qquad
  \phi(2n+1)=3\,2^{\{n-1\}}.
\end{equation}
The map preserves weight, and the length of a word in \(X\) equals the level
of its image.

A nonempty word is a \emph{Lyndon word} if it is strictly smaller than each
of its nonempty proper suffixes.  Let \(\operatorname{Lyn}_{N,p}(X)\) be the
set of Lyndon words in \(X\) of weight \(N\) and length \(p\).  Define
\(\operatorname{Lyn}_{N,p}(Y)\) similarly, using level \(p\) in place of
length.

\begin{lemma}\label{lem:lyndon-encoding}
For \(p\geq1\), the map \(\phi\) is order preserving and restricts to a
bijection
\[
  \phi:\operatorname{Lyn}_{N,p}(X)
  \xrightarrow{\sim}\operatorname{Lyn}_{N,p}(Y).
\]
\end{lemma}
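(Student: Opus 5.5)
The plan is to understand $\phi$ as a map between words in blocks, and to show that it is injective, order preserving, and that its image in level $p\geq1$ consists exactly of the words starting with $3$. Every word $w\in Y^*$ that starts with the letter $3$ factors uniquely as $w=3\,2^{\{n_1-1\}}\,3\,2^{\{n_2-1\}}\cdots 3\,2^{\{n_p-1\}}$, where $p=\lev(w)$. Hence $\phi$ is a bijection from $X^*\setminus\{\varnothing\}$ onto the set of nonempty words in $Y$ beginning with $3$, with inverse $w\mapsto(2n_1+1)\cdots(2n_p+1)$. It preserves weight, and it sends length to level. So it restricts to a bijection from words of weight $N$ and length $p$ onto words of weight $N$ and level $p$ that begin with $3$.

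\emph{Order preservation.} Take $u\neq u'$ in $X^*$. If $u$ is a proper prefix of $u'$, then $\phi(u)$ is a proper prefix of $\phi(u')$, so the order is respected. Otherwise, let $u=c\,m\,d$ and $u'=c\,m'\,d'$ with $m<m'$, say $m=2n+1$ and $m'=2n'+1$ with $n<n'$. Then $\phi(u)$ and $\phi(u')$ share the prefix $\phi(c)\,3\,2^{\{n-1\}}$. The next letter of $\phi(u')$ is $2$. The next letter of $\phi(u)$ is either $3$ (the start of the next block) or nonexistent (if $d$ is empty). Since $3<2$ in $Y$, and a proper prefix is smaller, we get $\phi(u)<\phi(u')$. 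The same comparison applies to words of different lengths, so $\phi$ is strictly order preserving on all of $X^*$. This is exactly why the ordering $3<2$ was chosen.

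\emph{Lyndon words correspond.} First, any Lyndon word $w\in Y^*$ of level $p\geq1$ begins with $3$. Indeed, if it began with $2$, take the suffix beginning at its first letter $3$, which exists because $p\geq1$. That suffix is strictly smaller than $w$, since it already differs at the first letter, contradicting the Lyndon property. So every such $w$ lies in the image of $\phi$.

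Next, let $w=\phi(u)$. The proper suffixes of $w$ that begin with $3$ are precisely the words $\phi(u_s)$, where $u_s$ runs over the proper suffixes of $u$. Every other proper suffix of $w$ begins with $2$, and is therefore automatically larger than $w$ because $w$ begins with $3$. Hence $w$ is Lyndon if and only if $w<\phi(u_s)$ for all proper suffixes $u_s$. By the strict order preservation, this holds if and only if $u<u_s$ for all $u_s$, that is, if and only if $u$ is Lyndon. Combined with the weight, length and level bookkeeping above, this gives the bijection $\operatorname{Lyn}_{N,p}(X)\to\operatorname{Lyn}_{N,p}(Y)$.

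The main obstacle is the order-preservation step. One must handle the case where one block ends the word while the other continues with $2$s, and check the prefix convention carefully at block boundaries. Once that is done, the rest is bookkeeping.
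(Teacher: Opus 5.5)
Your proof is correct and follows essentially the same route as the paper. It uses the block-by-block comparison with $3<2$ for order preservation, the observation that suffixes starting inside a code block begin with $2$ and therefore exceed $\phi(u)$, and the argument that a Lyndon word of positive level must begin with $3$ (you are slightly more explicit than the paper about the proper-prefix case and about $\phi$ reflecting the order, which follows from injectivity and totality).
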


\begin{proof}
When two words in \(X\) first differ at letters \(2m+1<2n+1\), the code
\(3\,2^{\{m-1\}}\) ends before \(3\,2^{\{n-1\}}\).  At that point the shorter code
is either finished or followed by a \(3\), while the longer code continues
with a \(2\).  Since \(3<2\), the order is preserved.

A suffix of \(\phi(u)\) that starts at the beginning of a code block is the
image of a suffix of \(u\).  A suffix that starts inside a code block begins
with \(2\), whereas \(\phi(u)\) begins with \(3\), and is therefore larger.
This proves that \(u\) is Lyndon if and only if \(\phi(u)\) is Lyndon.
A Lyndon word in \(Y\) with positive level must begin with \(3\); otherwise
a suffix beginning with \(3\) would be smaller.  Its inverse image is
therefore obtained by cutting immediately before each letter \(3\).
\end{proof}

For a Lyndon word \(u\) in \(X\), let \(\operatorname{br}(u)\) be its
standard Lyndon bracketing.  The Lyndon basis theorem states that the
\(\operatorname{br}(u)\) form a basis of the free Lie algebra on \(X\), and
that their expansions in the free associative algebra have the form
\begin{equation}\label{eq:lyndon-triangular}
  \operatorname{br}(u)
  =u+\sum_{v>u}n_{u,v}v,
  \qquad n_{u,v}\in\Z.
\end{equation}
Every word \(v\) in the sum is a permutation of the letters of \(u\), see
\cite{Reutenauer}.  We extend \(v\mapsto\partial_v\) (as in \eqref{eq:partial-word}) linearly from words to
the free associative algebra.  In particular,
\(\partial_{\operatorname{br}(u)}\) is obtained by replacing each letter
\(m\) by \(\partial_m\) and each Lie bracket by the commutator of operators.

\subsection{Triangularity for iterated cuts}
For words \(u,u'\in X^*\) of the same weight and length, define
\begin{equation}\label{eq:raw-pairing}
  M(u',u):=
  \bigl(\partial_u \mathcal Z(\phi(u'))\bigr)(1_F)\in\K.
\end{equation}
We interpret $M(u',u)$ as the entries of a matrix. We will later need to analyze the invertibility of this matrix. To this end, we study the $2$-adic valuation of its entries, following one main idea of Brown \cite{BrownMTZ}.

Extend the baseline valuation $\nu$ of \eqref{eq:baseline-valuation} additively from the alphabet
$X=\{3,5,7,\ldots\}$ to the free monoid $X^*$:
\begin{equation}\label{eq:word-baseline}
  \nu(\varnothing)=0,
  \qquad
  \nu(m_1\cdots m_p)
  :=\sum_{j=1}^p \nu(m_j).
\end{equation}

\begin{proposition}\label{prop:word-pairing}
Let \(u,u'\in X^*\) have the same weight and length.  Then
\(M(u',u)\in\Q\), and
\begin{align}
  v_2(M(u',u))&\geq \nu(u), \label{eq:word-pairing 1}\\
  u'<u&\Longrightarrow v_2(M(u',u))>\nu(u), \label{eq:word-pairing 2}\\
  v_2(M(u,u))&=\nu(u) \label{eq:word-pairing 3}.
\end{align}
For \(u=(2i_1+3)\cdots(2i_p+3)\), the unique contribution of valuation
\(\nu(u)\) to \(M(u,u)\) is
\begin{equation}\label{eq:diagonal-product}
  \prod_{j=1}^p c_{0,i_j}.
\end{equation}
\end{proposition}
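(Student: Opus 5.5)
We argue by induction on the length \(p\) of \(u\), applying the cuts \(\partial_{m}\) one at a time. Write \(u=m_1u_1\) with \(m_1=2r+1\), so that \(\partial_u=\partial_{m_1}\circ\partial_{u_1}\). The leftmost operator \(\partial_{m_1}\) is applied last, and the innermost \(\partial_{m_p}\) is applied first to \(\mathcal Z(\phi(u'))\). It is cleaner to peel from the right end of \(u\). Set \(u=\tilde u\,m_p\) with \(m_p=2r+1\); then \(\partial_u\mathcal Z(w)=\partial_{\tilde u}\bigl(\partial_{m_p}\mathcal Z(w)\bigr)\), where \(w=\phi(u')\) has level \(p\).

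By \cref{prop:deconcat}, modulo \(\UZ_{\K,\leq p-2}\),
\[
\partial_{m_p}\mathcal Z(w)=\sum_{w=u''v}c(v)\mathcal Z(u'')+E_{r,w}.
\]
The part in \(\UZ_{\K,\leq p-2}\) is killed by the remaining \(p-1\) cuts: by \cref{lem:level-drop} each cut lowers the level, so after \(p-1\) cuts we are in \(\UZ_{\K,\leq -1}=0\). The evaluation at \(1_F\) then only sees the level-\((p-1)\) terms, and those are words of the same weight and of level \(p-1\).

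Rationality of \(M(u',u)\) follows because all scalars arising are rational. The coefficients \(c_{a,b}\) and \(c^\partial_r\) are rational by \cref{prop:universal-coefficients}. Values at \(1_F\) are \(\varepsilon(\mathcal Z(\cdot))=Z_{\Phi_{\mathrm{ev}}}(\cdot)\in\Q\), since \(\Phi_{\mathrm{ev}}\) is rational. For level \(0\) the only word is \(\two{q}\), whose value is a rational constant.

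To control valuations, we generalize the statement so that it applies to all words \(w\) in \(\{2,3\}\) of level \(p\), not only images \(\phi(u')\). We then show \(v_2\bigl((\partial_u\mathcal Z(w))(1_F)\bigr)\geq\nu(u)\) by induction. Each step contributes either \(c(v)\), which by \cref{prop:2adic}(ii) has valuation \(\geq\nu(m_p)\), or an error coefficient from \(E_{r,w}\), which has valuation \(\geq1>\nu(m_p)\). The base case is \(\varepsilon(\mathcal Z(\two{q}))\). This requires \(v_2\geq0\), which is a genuine gap: \(Z_{\Phi_{\mathrm{ev}}}(\two{q})\) is rational, but nothing stated so far bounds its \(2\)-adic valuation. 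I expect this to be the main obstacle. The route I would try first is to note that words are of equal weight and length, so the final level-\(0\) word \(u''\) produced after all \(p\) deconcatenations is empty when \(\wt\) matches, i.e.\ \(Z(\varnothing)=1\). Indeed the weights removed sum to \(\wt(u)=\wt(w)\), so the only surviving final word is \(\varnothing\), and \(\mathcal Z(\varnothing)=1\). Hence the base case is trivial, and this yields \eqref{eq:word-pairing 1}.

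For strictness and the diagonal, we track the terms achieving valuation exactly \(\nu(u)\). These must use \(c(v)\) at each step (never an error term) with \(v_2(c(v))=\nu(m_j)\). By \eqref{eq:c-valuation} this means \(v_2\binom{2k-1}{2b}=0\). I would reduce to showing that the deconcatenation chain \(w=v_1\cdots v_p\) with \(\wt(v_j)=m_j\) exists only if \(w\geq\phi(u)\) in the \(Y\)-order, with equality only when each \(v_j=3\,\two{i_j}\), giving \eqref{eq:diagonal-product}. Here \(c(3\two{i})=c_{0,i}\) has exact valuation \(\nu\) by \cref{prop:2adic}. The delicate point is \(u'<u\): a minimal-valuation chain may use blocks \(\two{a}3\two{b}\) with \(a>0\) whose binomial \(\binom{2k-1}{2b}\) is odd. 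I would handle this by a lexicographic comparison. If \(\phi(u')\) splits into blocks of weights \(m_1,\dots,m_p\) and \(\phi(u')<\phi(u)\) (using \cref{lem:lyndon-encoding}), then at the first differing block position \(\phi(u')\) has its \(3\) earlier. Consequently the block of weight \(m_j\) cannot start with \(3\) and stay aligned, which forces a non-leading \(3\) in some block, i.e.\ \(a>0\). This must be combined with the valuation formula to gain at least one factor of \(2\). If that fails for some odd binomials, I would fall back on Brown's congruence argument, working modulo \(2^{\nu(u)+1}\) and using that \(c_{a,b}\equiv c_{b,a}\) modulo \(2\) so that reordered contributions cancel in pairs.
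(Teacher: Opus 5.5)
Your treatment of rationality and of \eqref{eq:word-pairing 1} is correct and is the paper's argument. You expand $\partial_u\mathcal Z(\phi(u'))$ via \cref{prop:deconcat} into chains of cuts, discard everything that drops the level by two, and use that principal factors have valuation $\geq\nu(m_j)$ while error factors have valuation $\geq 1>\nu(m_j)$. As you rightly observe, the weights force the final word to be empty, so the terminal value is $\mathcal Z(\varnothing)=1$.

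The gap is in \eqref{eq:word-pairing 2} and \eqref{eq:word-pairing 3}. You state the right reduction: an error-free chain $\phi(u')=v_1\cdots v_p$ with $\lev(v_j)=1$ and $\wt(v_j)=m_j$ exists only if $u\leq u'$, and for $u=u'$ only as the code blocks. But you never prove it, and your later discussion contradicts it. For $u'<u$ you claim the alignment ``forces a non-leading $3$'' in some block, and you then try to extract an extra factor of $2$ from blocks $\two{a}3\two{b}$ with $a>0$. That cannot work. By \cref{prop:2adic}, $v_2(c_{k-1,0})=\nu(2k+1)$ even though $a=k-1>0$; for example $c_{1,0}=-11/2$ when $k=2$. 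The pairwise-cancellation fallback cannot help either. For fixed $(u',u)$ the cut positions of an error-free chain are determined by the weights, so there is at most one such chain and nothing for it to cancel against.

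The missing idea is a simple alignment argument showing that for $u'<u$ there is no error-free chain at all. Write $v_j=\two{a_j}3\two{b_j}$, so that $i_j=a_j+b_j$.

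\begin{itemize}
\item Since $v_1$ is a prefix of $\phi(u')=3\,\two{i'_1}\,3\,\two{i'_2}\cdots$, one has $a_1=0$.
\item Since $v_1$ contains only one $3$, it cannot reach the second $3$. Hence $i_1=b_1\leq i'_1$.
\item If $i_1=i'_1$, then $v_2$ begins exactly at the second $3$, i.e.\ $a_2=0$.
\end{itemize}

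Iterating, at the first index where $i_j\neq i'_j$ one must have $i_j<i'_j$. So error-free chains occur only when $u\leq u'$. When $u=u'$, all $a_j=0$, so the unique chain is the code-block factorization, with coefficient $\prod_j c_{0,i_j}$ of valuation exactly $\nu(u)$.

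Every other contribution contains an error factor and therefore has valuation $>\nu(u)$. This gives \eqref{eq:word-pairing 2} and \eqref{eq:word-pairing 3} directly. The relevant dichotomy is error-free versus error-containing chains. You never need to decide which principal coefficients attain the baseline valuation.
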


\begin{proof}
Let $u=(2i_1+3)\cdots(2i_p+3)$.
We want to apply \cref{prop:deconcat} successively.
The operators in
\(\partial_{2 i_j+3}\) act one after the other on $\mathcal Z(u')$, producing intermediate terms proportional to $\mathcal Z(u'')$ for some intermediate words $u''$.
If $u''$ is one such word obtained after applying the $j$ operators $\partial_{2 i_{p-j+1}+3}\cdots \partial_{2 i_p+3}$, then it has level $\leq p-j$. If it has level $<p-j$, then the application of each further operator will reduce the level by one more, and \cref{lem:level-drop} shows that the term
is eventually killed.
So we have to consider only those terms in the action of each $\partial_{2 i_j+3}$ that reduce the level by exactly $1$, and those terms are those covered by \cref{prop:deconcat}.
Call the first term on the right of \eqref{eq:deconcat} the \emph{principal term}, and the second term \(E_{r,w}\) the \emph{error term}.
The principal term removes a level-one suffix
\(v=\two{a}3\two{b}\); for the letter \(2i+3\) that is acting, one has
\(a+b=i\), and the scalar factor is \(c_{a,b}\).  An \emph{error} term has a
scalar factor which is an integral linear combination of
\(c_{a,b}-c_{b,a}\) and \(c_{i+1}^{\partial}\). 
Since all these coefficients are rational it is clear that \(M(u',u)\in\Q\).
Furthermore, the contribution of the principal term to the valuation of the coefficient is \(v_2(c_{a,b})\geq \nu(2i+3)\), while the contribution of an error term is at least one, which is strictly larger than \(\nu(2i+3)\), see \cref{prop:2adic}. The inequality \eqref{eq:word-pairing 1} follows immediately.
By the same reasoning we also see that equality in \eqref{eq:word-pairing 1} cannot occur if at some step we use the error term.
It hence remains to understand summands containing only principal factors.  Write
\[
  u=(2i_1+3)\cdots(2i_p+3),
  \qquad
  u'=(2i'_1+3)\cdots(2i'_p+3).
\]
The successive suffix cuts partition
\[
  \phi(u')=3\,2^{\{i'_1\}}\,3\,2^{\{i'_2\}}\cdots3\,2^{\{i'_p\}}
\]
into \(p\) consecutive factors, each containing one letter \(3\).  From left
to right they have the form
\[
  3\,2^{\{b_1\}},\quad
  2^{\{a_2\}}3\,2^{\{b_2\}},\quad\ldots,\quad
  2^{\{a_p\}}3\,2^{\{b_p\}},
\]
where
\begin{equation}\label{eq:factor-boundaries}
  a_1=0,
  \qquad b_j+a_{j+1}=i'_j\quad(1\leq j<p),
  \qquad b_p=i'_p,
\end{equation}
and
\begin{equation}\label{eq:factor-weights}
  i_j=a_j+b_j\quad(1\leq j\leq p).
\end{equation}
The first relation gives
\(
  i_1=b_1=i'_1-a_2\leq i'_1
\).
If equality holds, then \(a_2=0\).  Repeating the argument shows that either
at the first place where the two sequences differ one has \(i_j<i'_j\), or
all inequalities are equalities.  Hence an all-principal contribution can
occur only when \(u\leq u'\). Thus if $u>u'$ then we must pick up an error factor so that \eqref{eq:word-pairing 2} follows.

If \(u=u'\), all the boundary shifts \(a_2,\ldots,a_p\) must be zero.  The
partition is then unique, its factors are the individual code blocks
\(3\,2^{\{i_j\}}\), and its scalar coefficient is exactly
\eqref{eq:diagonal-product}.  Each factor \(c_{0,i_j}\) has valuation
\(\nu(2i_j+3)\). The last two assertions follow.
\end{proof}

\begin{lemma}[A valuation criterion for invertibility]\label{lem:matrix-criterion}
Let \(A=(a_{ij})\) be a square matrix over \(\Q\), with its rows and columns
indexed by the same finite totally ordered set.  Suppose that for every
column \(j\) there is an integer \(d_j\) such that
\[
  v_2(a_{ij})\geq d_j\quad\text{for all }i,
  \qquad
  v_2(a_{jj})=d_j,
\]
and that the first inequality is strict whenever \(i<j\).  Then \(A\) is
invertible.
\end{lemma}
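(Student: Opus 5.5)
The plan is to rescale the columns so that every column's minimal valuation becomes zero, and then reduce the matrix modulo $2$. The reduction will be a triangular matrix with nonzero diagonal, hence invertible over $\mathbb F_2$, and this will force $\det A\neq 0$.

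First, I write $a_{ij}=2^{d_j}b_{ij}$. This defines a matrix $B=A\,D^{-1}$, where $D=\operatorname{diag}(2^{d_j})$ is invertible. It therefore suffices to show that $B$ is invertible. The hypotheses translate into conditions on $B$:
\begin{enumerate}[label=(\alph*)]
\item $v_2(b_{ij})\geq0$ for all $i,j$, so every entry lies in the local ring $\Z_{(2)}$ of rationals with odd denominator;
\item $v_2(b_{jj})=0$, so each diagonal entry is a unit in $\Z_{(2)}$;
\item $v_2(b_{ij})\geq1$ whenever $i<j$, so these entries lie in $2\Z_{(2)}$.
\end{enumerate}

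Next, I reduce $B$ modulo the maximal ideal $2\Z_{(2)}$, using the ring homomorphism $\Z_{(2)}\to\mathbb F_2$. Order rows and columns by the given total order. Then $\bar B$ has zero entries above the diagonal (positions with $i<j$) and ones on the diagonal. So $\bar B$ is lower unitriangular, and $\det\bar B=1$.

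Since the determinant is a polynomial with integer coefficients in the entries, reduction commutes with it: $\overline{\det B}=\det\bar B=1$. Hence $\det B$ is a unit in $\Z_{(2)}$, in particular nonzero in $\Q$. Therefore $\det A=\det B\cdot 2^{\sum_j d_j}\neq0$, and $A$ is invertible.

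The only point needing care is the orientation convention: the strict inequality is imposed for $i<j$, that is, for rows preceding the column index. It is worth double-checking that this yields a triangular $\bar B$ under a single consistent ordering of rows and columns. It does, since rows and columns are indexed by the same ordered set. There is no real obstacle. Alternatively, one can avoid reduction and argue through the Leibniz expansion: the identity permutation contributes a term of valuation exactly $\sum_j d_j$. Any other permutation $\pi$ has some $j$ with $\pi(j)<j$, which gives the factor $a_{\pi(j)j}$ valuation strictly greater than $d_j$, so that term has valuation strictly greater than $\sum_j d_j$. By the ultrametric property, $v_2(\det A)=\sum_j d_j<\infty$.
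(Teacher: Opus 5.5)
Your proof is correct and essentially the paper's: rescaling the columns by $2^{d_j}$ and reducing modulo $2\Z_{(2)}$ to a lower unitriangular matrix is a repackaging of the same valuation argument, and both yield $v_2(\det A)=\sum_j d_j$. Your closing alternative, the Leibniz expansion with a strictly-above-diagonal factor in every non-identity term plus the ultrametric inequality, is exactly the proof given in the paper.
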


\begin{proof}
In the determinant expansion, the diagonal product has valuation
\(\sum_jd_j\).  Every nonidentity permutation uses at least one entry
strictly above the diagonal.  Hence every other product in the determinant expansion has
valuation strictly larger than \(\sum_jd_j\).  The diagonal product cannot
be cancelled, and the determinant is nonzero.
\end{proof}

\subsection{The Lyndon pairing}

Fix a weight \(N\) and a length \(p\geq1\), and order
\(\operatorname{Lyn}_{N,p}(X)\) increasingly.  Define the square matrix
\begin{equation}\label{eq:pairing-matrix}
  P_{N,p}(u',u)
  :=\bigl(\partial_{\operatorname{br}(u)}\mathcal Z(\phi(u'))\bigr)(1_F),
  \qquad
  u,u'\in\operatorname{Lyn}_{N,p}(X).
\end{equation}

\begin{proposition}\label{prop:pairing-invertible}
For every \(N\) and \(p\geq1\), the matrix \(P_{N,p}\) has rational
entries and is invertible over \(\Q\).  It is therefore invertible over every field of
characteristic zero.
\end{proposition}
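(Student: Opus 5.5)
We will reduce the claim to \cref{lem:matrix-criterion}, with columns indexed by \(u\in\operatorname{Lyn}_{N,p}(X)\) and \(d_u:=\nu(u)\).

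The first step is to expand the Lyndon bracket. By \eqref{eq:lyndon-triangular} we have \(\operatorname{br}(u)=u+\sum_{v>u}n_{u,v}v\), with integer coefficients \(n_{u,v}\), and every \(v\) in the sum is a permutation of the letters of \(u\). Since \(v\mapsto\partial_v\) is extended linearly, this gives
\[
  P_{N,p}(u',u)=M(u',u)+\sum_{v>u}n_{u,v}\,M(u',v).
\]
Each such \(v\) has the same weight \(N\) and length \(p\) as \(u'\), so \cref{prop:word-pairing} applies to every term. In particular all entries are rational.

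The second step is to control valuations. A permutation of letters does not change the additive baseline valuation, so \(\nu(v)=\nu(u)\) for every \(v\) occurring. Since \(n_{u,v}\in\Z\) has \(v_2(n_{u,v})\geq0\), \eqref{eq:word-pairing 1} gives \(v_2(P_{N,p}(u',u))\geq\nu(u)\) for all \(u'\).

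Next consider the diagonal entry \(u'=u\). Every correction term has \(v>u=u'\), so \eqref{eq:word-pairing 2}, applied to the pair \((u',v)\), gives \(v_2(M(u,v))>\nu(v)=\nu(u)\). Together with \eqref{eq:word-pairing 3} this yields \(v_2(P_{N,p}(u,u))=\nu(u)\), because the leading term cannot be cancelled by terms of strictly larger valuation.

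For the strict inequality in rows \(u'<u\), each term involves \(M(u',w)\) with \(w\in\{u\}\cup\{v>u\}\). In every case \(w\geq u>u'\), so \eqref{eq:word-pairing 2} gives \(v_2(M(u',w))>\nu(w)=\nu(u)\). Hence \(v_2(P_{N,p}(u',u))>\nu(u)\) whenever \(u'<u\).

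\Cref{lem:matrix-criterion}, applied with the increasing order on \(\operatorname{Lyn}_{N,p}(X)\), now gives \(\det P_{N,p}\neq0\) in \(\Q\). Invertibility over any field of characteristic zero follows, since \(\Q\) embeds into that field and the determinant stays nonzero.

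The step I expect to need the most care is checking the direction of the orderings. The corrections in \(\operatorname{br}(u)\) are words \(v>u\), and \eqref{eq:word-pairing 2} gives strictness exactly when the row word is smaller than the column word; these two orderings have to match so that all terms except the principal diagonal one are pushed to higher valuation. A second point is that \(\nu\) is invariant under permutation of letters, which is immediate from its additive definition \eqref{eq:word-baseline}.
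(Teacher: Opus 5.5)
Your proposal is correct and is essentially the paper's own argument. You expand $\operatorname{br}(u)$ via \eqref{eq:lyndon-triangular}, note that $\nu(v)=\nu(u)$ for the permuted words $v>u$, and apply \cref{prop:word-pairing} termwise, using $u'\leq u<v$ for the correction terms, to verify the hypotheses of \cref{lem:matrix-criterion} with $d_u=\nu(u)$; your remarks on $n_{u,v}\in\Z$ and on the non-cancellation of the diagonal leading term only make explicit what the paper leaves implicit.
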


\begin{proof}
Fix a column indexed by a Lyndon word \(u\).  The leading word in
\eqref{eq:lyndon-triangular} is \(u\).  By \cref{prop:word-pairing}, its
"raw" column (without the sub-leading terms from \eqref{eq:lyndon-triangular}) has the following properties:
\[
  u'\longmapsto M(u',u)
\]
has valuation at least \(\nu(u)\) everywhere, strictly larger than \(\nu(u)\)
in rows \(u'<u\), and exactly \(\nu(u)\) in the diagonal row \(u'=u\).

Every other word \(v\) in \eqref{eq:lyndon-triangular} satisfies \(v>u\) and
is a permutation of the letters of \(u\).  Thus \(\nu(v)=\nu(u)\).  For a row
\(u'<u\), and also for the diagonal row \(u'=u\), one has \(u'<v\).
\Cref{prop:word-pairing} therefore shows that the contribution of
\(\partial_v\) has valuation strictly larger than \(\nu(u)\) in all these
rows.  Consequently the full column, including the sub-leading terms from \eqref{eq:lyndon-triangular}, has the same diagonal leading
term and the same strict inequality above the diagonal as the raw column.

The hypotheses of \cref{lem:matrix-criterion} are satisfied with
\(d_j=\nu(u)\) in the column indexed by \(u\).  Hence \(P_{N,p}\) is
invertible.
\end{proof}

\section{Proof of Theorem \ref{thm:main}}\label{sec:proof}
By rescaling the $\sigma_{2k+1}$, we may assume that they form a normalized family as in Definition \ref{def:normalized-family}.  Let
\[
  \rho_{\mathrm{Lie}}:
  \Lhat_{\K}(s_3,s_5,s_7,\ldots)\longrightarrow\grt_1(\K)
\]
be the map of \eqref{eq:source-lie-map} constructed using this family.
Suppose that $0\neq L\in \Lhat_{\K}(s_3,s_5,s_7,\ldots)$ is in the kernel of \(\rho_{\mathrm{Lie}}\), i.e., $\rho_{\mathrm{Lie}}(L)=0$.
Because \(\rho_{\mathrm{Lie}}\) preserves weight, a nonzero element of its
kernel has a nonzero homogeneous component in the kernel, and hence we may assume that \(L\) is homogeneous of weight \(N\).

Decompose \(L\) according to bracket length, meaning the number of generator
letters in a Lie monomial:
\[
  L=L_p+L_{p+1}+\cdots,
  \qquad L_p\neq0,
\]
where \(p\) is minimal.  By the Lyndon basis theorem,
\begin{equation}\label{eq:Lp}
  L_p=
  \sum_{u\in\operatorname{Lyn}_{N,p}(X)}
  a_u\operatorname{br}(u),
  \qquad a_u\in\K.
\end{equation}
Since
\(\rho_{\mathrm{Lie}}(L)=0\),
\[
  \rho_F(\exp(tL))
  =\exp\bigl(t\rho_{\mathrm{Lie}}(L)\bigr)=1.
\]
Every orbit function is therefore constant along left multiplication by
\(\exp(tL)\), and in particular we have, for any fixed \(u'\in\operatorname{Lyn}_{N,p}(X)\).  
\begin{equation}\label{eq:DL-zero}
  D_L\mathcal Z(\phi(u'))=0.
\end{equation}

A Lie monomial of bracket length \(q\) acts, up to the nonzero sign in
\eqref{eq:anti-sign}, as a sum of compositions of \(q\) operators
\(\partial_m\).  Every nonzero operator lowers the level by at least one, by
\cref{lem:level-drop}.  The function \(\mathcal Z(\phi(u'))\) has level \(p\).
After \(p\) nonzero derivatives, only level-zero functions remain; these are
linear combinations of the functions \(\mathcal Z(\two{a})\), which are constant by
\cref{lem:single-zeta}.  Every further derivative is zero.  It follows that
all components \(L_q\) with \(q>p\) act trivially on \(\mathcal Z(\phi(u'))\).
Equation \eqref{eq:DL-zero} therefore reduces to
\[
  D_{L_p}\mathcal Z(\phi(u'))=0.
\]

Every summand in \eqref{eq:Lp} has bracket length \(p\).  By
\eqref{eq:anti-sign}, evaluation at \(1_F\) gives, up to the common nonzero
sign \((-1)^{p-1}\),
\[
  \sum_{u\in\operatorname{Lyn}_{N,p}(X)}
  P_{N,p}(u',u)a_u=0
\]
for $P_{N,p}(u',u)$ the entries of the pairing matrix of \eqref{eq:pairing-matrix}.
This holds for every row \(u'\).  The matrix \(P_{N,p}\) is invertible by
Proposition~\ref{prop:pairing-invertible}, so every \(a_u\) is zero.  This contradicts
\(L_p\neq0\). We conclude that \(\rho_{\mathrm{Lie}}\) is injective.

\hfill\qed


\begin{thebibliography}{99}

\bibitem{APS}
A. Alekseev, M. Podkopaeva, P. Ševera.
\newblock On rational Drinfeld associators.
\newblock {\it Selecta Math.} (N.S.) 17 (2011), no. 1, 47--65. 

\bibitem{BrownMTZ}
F.~Brown,
\emph{Mixed Tate motives over $\Z$},
Ann.\ of Math. (2) \textbf{175} (2012), 949--976.

\bibitem{BCGP}
F.~Brown, M.~Chan, S.~Galatius, S.~Payne.
\newblock Hopf algebras in the cohomology of $\mathcal A_g$, $\mathrm{GL}_n(\Z)$, and $\mathrm{SL}_n(\Z)$.
\newblock arXiv:2405.11528.

\bibitem{Drinfeld}
V.~G.~Drinfeld,
\emph{On quasitriangular quasi-Hopf algebras and on a group that is closely
connected with $\operatorname{Gal}(\overline{\Q}/\Q)$},
Leningrad Math. J. \textbf{2} (1991), 829--860.

\bibitem{Reutenauer}
C.~Reutenauer,
\emph{Free Lie Algebras},
London Mathematical Society Monographs, New Series 7,
Oxford University Press, 1993.

\bibitem{Terasoma}
T.~Terasoma,
\emph{Brown--Zagier relation for associators},
arXiv:1301.7474, 2013.

\bibitem{Willwacher}
T.~Willwacher,
\emph{The Grothendieck--Teichm\"uller Group},
lecture notes, 2014, especially Chapter~7.

\bibitem{Zagier}
D.~Zagier,
\emph{Evaluation of the multiple zeta values
$\zeta(2,\ldots,2,3,2,\ldots,2)$},
Ann. of Math. (2) \textbf{175} (2012), 977--1000.

\end{thebibliography}
\end{document}